\theoremstyle{plain}
\newtheorem{thm}{Theorem}[section]
\newtheorem{lemma}[thm]{Lemma}
\newtheorem{proposition}[thm]{Proposition}
\theoremstyle{definition}
\newcommand{\Z}{\mathbb{Z}}
\newcommand{\bnum}{\begin{enumerate}}
\newcommand{\enum}{\end{enumerate}}
\numberwithin{equation}{section}
\DeclareMathOperator{\Cent}{Cent}
\begin{document}
\title{\textbf{A note on $n$-centralizer finite rings}}
\author{Jutirekha Dutta, Dhiren K. Basnet and Rajat K. Nath\footnote{Corresponding author}}
\date{}
\maketitle
\begin{center}\small{\it 
Department of Mathematical Sciences, Tezpur University,\\ Napaam-784028, Sonitpur, Assam, India.\\

Emails:\, jutirekhadutta@yahoo.com,  dbasnet@tezu.ernet.in and rajatkantinath@yahoo.com*}
\end{center}

\medskip

\begin{abstract} 
Let $R$ be a finite ring and let $\Cent(R)$ denote the set of all  distinct centralizers of $R$.  $R$ is called an $n$-centralizer ring if $|Cent(R)| = n$. In this paper, we characterize $n$-centralizer finite rings for $n \leq 7$. 
\end{abstract}

\medskip

\noindent {\small{\textit{Key words:}  finite ring, $n$-centralizer ring.}}  
 
\noindent {\small{\textit{2010 Mathematics Subject Classification:} 
16U70.}} 

\medskip

\section{Introduction}
Let $F$ be an algebraic system having finite number of elements which is closed under a multiplication operation. Let $\Cent(F)$ denote the set $\{C(x) : x \in F\}$ where $C(x) = \{y \in F : xy = yx\}$ is called the centralizer of $x$ in $F$. $F$ is called $n$-centralizer if $|\Cent(F)| = n$. The study of finite $n$-centralizer group was initiated by Belcastro and Sherman \cite{bG94} in the year 1994. Since then many mathematician have studied $n$-centralizer group for different values of $n$. Characterizations of finite $n$-centralizer groups for $n = 4, 5, 6, 7, 8$ and $9$ can be found in \cite{ASA2007, ashrafi00, ashrafi2000,  ashrafi2006, baishya, bG94, Dutta13}. In \cite{Jdutta}, the authors have initiated the study of finite $n$-centralizer rings and characterized $n$-centralizer finite rings for $n = 4, 5$. Further, Dutta has extended the characterizations of $4$-centralizer groups and rings  to infinite case  in \cite{Dutta13} and \cite{Dutta14} respectively. In this paper, we give a characterization of  $n$-centralizer finite rings for $n = 6, 7$ including a new characterization for $n = 4, 5$. 
The problem of characterizing finite rings 
has received considerable attention in recent years (see \cite{bBhK14, Cu10, dOP94, Fin93, gC95} etc.) starting from the works of  Eldridge \cite{El68} and   Raghavendran \cite{Ra70}.


\section{Preliminaries}
In this section first we describe some notations and prove some  results which are useful for the subsequent sections. Throughout this paper $R$ denotes a finite ring. For any subring $S$ of $R$,   $R/S$ or $\frac{R}{S}$  denotes the additive quotient group  and $|R : S|$ denotes the index of  the additive subgroup $S$ in the additive group $R$. Note that the isomorphisms considered are the additive group isomorphisms. Also for any two non-empty subsets $A$ and $B$ of  $R$, we define $A + B := \{a + b : a \in A, b \in B\}$. We shall use the fact that  for any non-commutative ring $R$, the additive group $\frac{R}{Z(R)}$ is not a cyclic group (see \cite[Lemma 1]{dmachale}) where $Z(R) = \{ s \in R \;:\; rs = sr \text{ for all } r \in R\}$ is the center of $R$. 
  
 In \cite{Jdutta}, we have observed that there is no $2, 3$-centralizer ring. We also have the  following result.

\begin{thm}\label{4cent}
Let $R$ be a non-commutative ring. Then
\begin{enumerate}
\item $|\Cent(R)| = 4$ if and only if $|R : Z(R)| = 4$ \emph{(see \cite[Theorem 3.2]{Jdutta})}.
\item $|\Cent(R)| = 5$ if and only if $|R : Z(R)| = 9$ \emph{(see \cite[Theorem 4.3]{Jdutta})}.
\end{enumerate}
\end{thm}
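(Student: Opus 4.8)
The plan is to prove both equivalences simultaneously, treating $\Z_2\times\Z_2$ and $\Z_3\times\Z_3$ as the cases $p=2$ and $p=3$ of a noncyclic group of order $p^2$. The basic observation is that for $x\notin Z(R)$ the centralizer $C(x)$ is an additive subgroup of $R$ (indeed a subring) with $Z(R)\subsetneq C(x)\subsetneq R$: it strictly contains $Z(R)$ because $x\in C(x)\setminus Z(R)$, and it is proper because $x$ does not commute with everything. Moreover $R=\bigcup_{x\notin Z(R)}C(x)$, whereas $Z(R)=\bigcap_{x\in R}C(x)$ is just the intersection of the finitely many distinct proper centralizers; hence $|\Cent(R)|=n$ says precisely that $R$ is a union of $n-1$ proper additive subgroups whose total intersection is $Z(R)$. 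I will freely use that a finite group is never the union of two proper subgroups, and Scorza's classical fact that a group which is an irredundant union of three proper subgroups has each of them of index $2$ and quotient isomorphic to $\Z_2\times\Z_2$ by their intersection.

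For the two ``if'' statements, suppose $|R:Z(R)|=p^2$ with $p\in\{2,3\}$. Since $R$ is noncommutative, $R/Z(R)$ is noncyclic by \cite[Lemma 1]{dmachale}, hence $R/Z(R)\cong\Z_p\times\Z_p$, which has exactly $p+1$ proper nontrivial subgroups $H_1/Z(R),\dots,H_{p+1}/Z(R)$, each of order $p$, with every nonzero element lying in exactly one of them. For $x\notin Z(R)$ the subgroup $C(x)/Z(R)$ is proper and nontrivial, hence is one of the $H_i/Z(R)$, so $C(x)=H_i$; conversely each $H_i$ occurs, since any $x_i\in H_i\setminus Z(R)$ has $C(x_i)=H_i$ because the nonzero coset $x_i+Z(R)$ sits in a unique subgroup of order $p$. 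Thus $\Cent(R)=\{R,H_1,\dots,H_{p+1}\}$ and $|\Cent(R)|=p+2$, which equals $4$ for $p=2$ and $5$ for $p=3$.

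For the ``only if'' of part (a): if $|\Cent(R)|=4$, then $R$ is the union of its three proper centralizers $C_1,C_2,C_3$, necessarily irredundantly, so by Scorza $R/(C_1\cap C_2\cap C_3)\cong\Z_2\times\Z_2$; since $C_1\cap C_2\cap C_3=Z(R)$, we get $|R:Z(R)|=4$. For the ``only if'' of part (b), write the four proper centralizers as $C_1,C_2,C_3,C_4$ when $|\Cent(R)|=5$. The crucial step is that $R$ is not the union of any three of them. Indeed, suppose $R=C_1\cup C_2\cup C_3$; by Scorza, $D:=C_1\cap C_2\cap C_3$ has index $4$ and $C_i\cap C_j=D$ for $i\ne j$. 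If $D=Z(R)$, then $|R:Z(R)|=4$ and part (a) gives $|\Cent(R)|=4$, a contradiction. Otherwise there is $x_0\in D\setminus C_4=D\setminus Z(R)$, and since $x_0\in C(x_0)\setminus C_4$ we may take $C(x_0)=C_1$. Writing $C_4=C(w)$ with $w\in C_4$ and using the symmetry $y\in C(z)\Leftrightarrow z\in C(y)$: if $w\in C_1$ then $x_0\in C(w)=C_4$, impossible, so (by symmetry in $C_2,C_3$) we may take $w\in C_2\setminus D$; writing $C_2=C(v)$ with $v\in C_2$, if $v\in C_2\setminus D$ then $v\notin C_1=C(x_0)$, so $x_0\notin C(v)=C_2$, contradicting $x_0\in D\subseteq C_2$; hence $v\in D$; but then $w$ commutes with $v$, so $v\in C(w)=C_4$, giving $v\in D\cap C_4=Z(R)$, contradicting $C(v)=C_2\ne R$. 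Therefore no three of the $C_i$ cover $R$, so $R=C_1\cup C_2\cup C_3\cup C_4$ is an irredundant cover by four proper subgroups whose intersection is $Z(R)$. Passing to the abelian group $R/Z(R)$, this exhibits $R/Z(R)$ as an irredundant union of four proper subgroups with trivial intersection; by the classical classification of such covers (Greco; see also work of Cohn and Tomkinson on covering numbers), together with the fact that $R/Z(R)$ is abelian and hence not $S_3$, we conclude $R/Z(R)\cong\Z_3\times\Z_3$ and $|R:Z(R)|=9$.

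The step I expect to be the main obstacle is this last one: first the coset-chasing argument above showing a $5$-centralizer ring cannot be covered by three of its centralizers, and second the invocation of the covering classification in exactly the form needed (an irredundant union of four proper subgroups has quotient by the intersection of the cover isomorphic to $\Z_3\times\Z_3$ or $S_3$). If one prefers to keep the proof self-contained, the abelian instance required here can be obtained by hand: a noncyclic finite abelian group $A$ that is an irredundant union of four proper subgroups has $2$-rank at most $1$ (otherwise three index-$2$ subgroups already cover it), so the least prime $p$ with $A$ having $\Z_p\times\Z_p$ as a quotient must be $3$ (it is not $2$, and $p\ge 5$ would force $A$ to require at least six proper subgroups); a short further argument, using that the four subgroups meet in the identity, then forces $A\cong\Z_3\times\Z_3$.
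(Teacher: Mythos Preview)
The paper does not actually prove this theorem: it is stated in the preliminaries with citations to \cite[Theorems~3.2 and~4.3]{Jdutta} and used as a black box thereafter. So there is no ``paper's proof'' to compare against; I can only assess your argument on its merits and point out where the paper's own tools would help you.

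Your ``if'' directions and the ``only if'' for part~(a) are correct. In part~(b), your coset-chasing argument that no three of $C_1,\dots,C_4$ cover $R$ is also correct (and nice). The gap is in the very last step. The classification you invoke is not true in the form you state it: an irredundant cover of a group by four proper subgroups need \emph{not} have quotient by the core isomorphic to $\Z_3\times\Z_3$ or $S_3$. For instance, in $A=\Z_2\times\Z_4$ the four subgroups $\langle(0,1)\rangle$, $\langle(1,1)\rangle$, $\langle(1,0)\rangle$, $\langle(1,2)\rangle$ form an irredundant cover with trivial intersection, yet $A\not\cong\Z_3\times\Z_3$. Your ``by hand'' sketch has the same flaw: the fact that a group of $2$-rank $\ge 2$ can be covered by three index-$2$ subgroups does not prevent it from also admitting an \emph{irredundant} cover by four different subgroups. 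You seem to be conflating ``$\sigma(G)=4$'' (minimal covering number) with ``$G$ has an irredundant $4$-cover''; only the former supports the $\Z_3\times\Z_3$/$S_3$ dichotomy.

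The repair is easy with the paper's tools. From your irredundant $4$-cover of $R$ with intersection $Z(R)$, Theorem~\ref{tomtheorem} gives $|R:Z(R)|\le\max\{9,8\}=9$, so $|R:Z(R)|\in\{4,8,9\}$ since $R/Z(R)$ is noncyclic. The case $|R:Z(R)|=4$ contradicts part~(a). If $|R:Z(R)|=8$, then by Lemma~\ref{C(x)commutative} every proper centralizer is commutative, so by your own observation (or Proposition~\ref{thm1}(c)) distinct proper centralizers meet in $Z(R)$; writing $k_i=|C_i:Z(R)|\in\{2,4\}$ and counting gives
\[
|R:Z(R)|-1=\sum_{i=1}^{4}(k_i-1),\quad\text{i.e.}\quad 7=\sum_{i=1}^{4}(k_i-1),
\]
which is impossible with each $k_i-1\in\{1,3\}$. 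Hence $|R:Z(R)|=9$, as required. This is essentially the device the paper itself uses in the proof of Theorem~\ref{thm2}(b).
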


 
A family of proper subgroups  of a group $G$ is called a cover of $G$ if $G$ is the union of those subgroups. A cover $X = \{H_1, H_2, \dots, H_k\}$ of $G$ is called irredundant if no proper subset of $X$ is also a cover of $G$. In this paper, we shall use frequently the following two results regarding cover and irredundant cover of a group, proved by Tomkinson \cite{tom87}.

\begin{lemma}[\cite{tom87}, Lemma $3.3$]\label{tomlemma}
Let $M$ be a proper subgroup of the (finite) group $G$ and let $H_1, H_2, \dots, H_k$ be subgroups with $|G : H_i| = \beta_i$ and $\beta_1 \leq \beta_2 \leq \dots \leq \beta_k$. If $G = M \cup H_1 \cup H_2 \cup \dots \cup H_k$, then $\beta_1 \leq k$.

Furthermore, if $\beta_1 = k$, then $\beta_1 = \beta_2 = \dots = \beta_k = k$ and $H_i \cap H_j \leq M$, for all $i \neq j$.
\end{lemma}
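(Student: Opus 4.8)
We indicate how one could prove the lemma. The plan is to compare the number of elements in the coset-complement $G \setminus M$ with how much the subgroups $H_1, \dots, H_k$ can contribute toward covering it. First I would set $t := |G : M|$ and note $t \geq 2$ because $M$ is proper. From $G = M \cup H_1 \cup \dots \cup H_k$ one gets $G \setminus M \subseteq \bigcup_{i \,:\, H_i \not\subseteq M}(H_i \setminus M)$, so only the subgroups not contained in $M$ play a role; I would write $N$ for their number, so $1 \leq N \leq k$ (with $N \geq 1$ since $M \neq G$).

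The key step would be to bound $|H_i \setminus M|$ for $H_i \not\subseteq M$. Setting $s_i := |H_i : H_i \cap M|$, the assignment $(H_i \cap M)h \mapsto Mh$ (for $h \in H_i$) is a well-defined injection from the cosets of $H_i \cap M$ in $H_i$ into the cosets of $M$ in $G$, which gives $2 \leq s_i \leq t$. Then $|H_i \setminus M| = |H_i|\bigl(1 - \tfrac{1}{s_i}\bigr) \leq |H_i|\bigl(1 - \tfrac{1}{t}\bigr) \leq |H_1|\bigl(1 - \tfrac{1}{t}\bigr)$, the last inequality coming from $\beta_i \geq \beta_1$, i.e.\ $|H_i| \leq |H_1|$. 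Since $|G \setminus M| = |G|\bigl(1 - \tfrac{1}{t}\bigr)$, summing the bound over the $N$ relevant indices and cancelling the positive factor $1 - \tfrac{1}{t}$ yields $|G| \leq N\,|H_1| \leq k\,|H_1|$, i.e.\ $\beta_1 = |G : H_1| \leq k$.

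For the second assertion I would simply track the equality cases of the two inequalities just used. If $\beta_1 = k$, then $\beta_1 \leq N \leq k = \beta_1$ forces $N = k$, so no $H_i$ is contained in $M$; each estimate $|H_i \setminus M| \leq |H_1|\bigl(1 - \tfrac{1}{t}\bigr)$ must be an equality, which (both compared factors being positive, with $|H_i| \leq |H_1|$ and $1 - \tfrac{1}{s_i} \leq 1 - \tfrac{1}{t}$) forces $|H_i| = |H_1|$, hence $\beta_i = k$, for every $i$; and the inequality $|G \setminus M| \leq \sum_i |H_i \setminus M|$ must also be an equality, which for finite sets means the sets $H_i \setminus M$ are pairwise disjoint. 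As $(H_i \setminus M) \cap (H_j \setminus M) = (H_i \cap H_j) \setminus M$, this is exactly the assertion $H_i \cap H_j \subseteq M$ for all $i \neq j$.

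The only point needing a moment's care is the inequality $s_i \leq t$, i.e.\ $|H_i : H_i \cap M| \leq |G : M|$; once the coset embedding is written down it is immediate, and the rest is bookkeeping with the covering inclusion $G \setminus M \subseteq \bigcup(H_i \setminus M)$ and its equality case. Degenerate configurations — some $H_i$ lying inside $M$, or $\beta_1 = 1$ — create no difficulty, since such an $H_i$ contributes only the empty set toward covering $G \setminus M$.
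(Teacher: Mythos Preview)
The paper does not supply its own proof of this lemma; it merely quotes it as Lemma~3.3 of Tomkinson~\cite{tom87}. Your argument is correct and is in fact the standard counting proof behind Tomkinson's result: bound each $|H_i\setminus M|$ by $|H_1|\bigl(1-\tfrac{1}{t}\bigr)$ via the index inequality $|H_i:H_i\cap M|\le|G:M|$, compare with $|G\setminus M|=|G|\bigl(1-\tfrac{1}{t}\bigr)$, and read off the equality case. One minor wording point: in the equality analysis you speak of ``both compared factors'' as if a single product inequality were in play, but what actually does the work is the two-step chain $|H_i|\bigl(1-\tfrac{1}{s_i}\bigr)\le|H_i|\bigl(1-\tfrac{1}{t}\bigr)\le|H_1|\bigl(1-\tfrac{1}{t}\bigr)$, whose collapse forces $s_i=t$ and $|H_i|=|H_1|$ separately; you may want to phrase it that way to avoid any ambiguity.
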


\begin{thm}[\cite{tom87}, Theorem $3.4$]\label{tomtheorem}
Let $\{X_1, X_2, \dots, X_n\}$ be an irredundant cover of  a group $G$   such that $|G : X_i| = \alpha_i$ where $\alpha_1 \leq \alpha_2 \leq \dots \leq \alpha_k$ and $D = \overset{n}{\underset{i = 1}{\cap}}X_i$. Then
\begin{enumerate}
\item $\alpha_2 \leq n - 1$.
\item If $\alpha_2 = n - 1$ then $|G : D| \leq (n - 1)^2(n - 3)!$.
\item If $\alpha_2 < n - 1$ then $|G : D| \leq (n - 2)^3(n - 3)!$.
\end{enumerate}
\end{thm}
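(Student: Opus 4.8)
\noindent\emph{Proof strategy.} The plan is to deduce part (a) directly from Lemma~\ref{tomlemma}, and to reduce parts (b) and (c) to a Neumann-type estimate on coset coverings via a translation trick. For (a): apply Lemma~\ref{tomlemma} with $M:=X_1$ (proper, being a member of an irredundant cover) and $H_1,\dots,H_{n-1}:=X_2,\dots,X_n$, whose indices $\alpha_2\le\cdots\le\alpha_n$ are already sorted; since $G=X_1\cup X_2\cup\cdots\cup X_n$, the lemma gives $\alpha_2\le n-1$. For the translation trick, use irredundancy to pick for each $i$ an element $a_i\in X_i\setminus\bigcup_{j\ne i}X_j$; for any $x\in G$ the element $a_ix$ lies in some $X_k$, so $x\in a_i^{-1}X_k$, which is $X_i$ if $k=i$ and a left coset of $X_k$ distinct from $X_k$ otherwise, whence
\[
G=X_i\cup\bigcup_{k\ne i}a_i^{-1}X_k .
\]
Intersecting this with a subgroup $S\le G$, and using that the intersection of two left cosets is empty or a left coset of the intersection of the two subgroups, exhibits $S$ as a union of at most $n-1$ cosets of the subgroups $S\cap X_j$, whose common intersection is $D$; it then remains to bound $|S:D|$ in terms of the number of cosets.

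\medskip
\noindent\textbf{Part (b).} Assume $\alpha_2=n-1$. The equality clause of Lemma~\ref{tomlemma}, applied as above, gives $\alpha_2=\cdots=\alpha_n=n-1$ and $X_i\cap X_j\le X_1$ for all $2\le i<j\le n$; in particular $\bigcap_{i=2}^n X_i\le X_2\cap X_3\le X_1$, so $D=\bigcap_{i=1}^n X_i=\bigcap_{i=2}^n(X_1\cap X_i)$. Intersecting the covering $G=X_2\cup\bigcup_{k\ne2}a_2^{-1}X_k$ with $X_1$ and noting that the $k=1$ term vanishes (as $a_2\notin X_1$), one obtains
\[
X_1=(X_1\cap X_2)\cup\bigcup_{k=3}^n\bigl(X_1\cap a_2^{-1}X_k\bigr),
\]
a covering of $X_1$ by at most $n-1$ cosets of $X_1\cap X_2,\dots,X_1\cap X_n$, with common intersection $D$. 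Here $|X_1:X_1\cap X_i|\le|G:X_i|=n-1$, and counting $|G|$ over the disjoint decomposition $G=X_1\sqcup\bigsqcup_{i=2}^n(X_i\setminus X_1)$ yields $\sum_{i=2}^n|X_1:X_1\cap X_i|^{-1}=1$; this identity forces every coset above to be nonempty with cardinalities summing to $|X_1|$, so $X_1$ is in fact \emph{partitioned} into exactly $n-1$ cosets, one of them ($X_1\cap X_2$) a subgroup. The Neumann-type estimate for such a partition should bound $|X_1:D|$ by $(n-1)(n-3)!$, and since $|G:X_1|=\alpha_1\le\alpha_2=n-1$ this gives $|G:D|=\alpha_1|X_1:D|\le(n-1)^2(n-3)!$. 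The bound is sharp: the elementary abelian group of order $9$, covered irredundantly by its four subgroups of order $3$, has $n=4$ and $|G:D|=9$.

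\medskip
\noindent\textbf{Part (c).} Assume $\alpha_2<n-1$, so $\alpha_1\le\alpha_2\le n-2$. I would run the same computation: intersecting $G=X_2\cup\bigcup_{k\ne2}a_2^{-1}X_k$ with $X_1$ again gives $X_1=(X_1\cap X_2)\cup\bigcup_{k=3}^n(X_1\cap a_2^{-1}X_k)$, a covering of $X_1$ by at most $n-1$ cosets of subgroups with common intersection $D$. Now the relations $X_i\cap X_j\le X_1$ are unavailable, so there is no normalising identity and the covering need not be a partition; the correspondingly weaker Neumann-type estimate for coverings should bound $|X_1:D|$ by $(n-2)^2(n-3)!$, whence $|G:D|=\alpha_1|X_1:D|\le(n-2)^3(n-3)!$.

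\medskip
\noindent\textbf{Where the difficulty lies.} The real content is the Neumann-type estimate used in (b) and (c): for a finite group $H$ expressed as the union, resp.\ a partition, of $m$ left cosets of subgroups $K_1,\dots,K_m$, one must bound $|H:\bigcap_{j=1}^m K_j|$ by an explicit function of $m$. The natural route is induction on $m$ --- collect the cosets of a common $K_j$, pass to that subgroup, discard cosets that have become redundant, and re-sort the indices --- and I expect the delicate point to be carrying the constants through this recursion so that they come out exactly $(n-1)(n-3)!$ in the partition case and the stated weaker value in the covering case, together with checking that the reduced configuration remains of the required type at each step. This inductive estimate on coset coverings, going back to B.\,H. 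Neumann and sharpened by Tomkinson, is the crux of the whole argument.
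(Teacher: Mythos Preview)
The paper does not prove this theorem at all: it is quoted verbatim from Tomkinson's paper \cite{tom87} as an external result and used as a black box throughout. There is therefore no ``paper's own proof'' to compare your proposal against.

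That said, your sketch is a plausible outline of how Tomkinson's original argument runs --- part (a) does follow at once from Lemma~\ref{tomlemma} as you indicate, and the translation trick with the $a_i$ is the standard device for passing from subgroup covers to coset covers. But as you yourself acknowledge, the heart of the matter is the Neumann-type bound on $|H:\bigcap K_j|$ for a union of $m$ cosets, and you have not supplied this; you have only reduced (b) and (c) to it and stated the constants you would need. In particular, your claimed partition identity in (b) --- that the $n-1$ cosets exactly partition $X_1$ with sizes summing to $|X_1|$ --- requires more care than the one-line counting argument you give (the sets $X_i\setminus X_1$ need not be pairwise disjoint a priori; disjointness is what you are trying to establish), and in (c) the bound $(n-2)^2(n-3)!$ for a non-partition covering of $X_1$ by $n-1$ cosets is simply asserted. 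So what you have is a correct reduction together with an honest admission that the inductive estimate, which is the actual content of Tomkinson's theorem, remains to be done.
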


	Let $F$ be an algebraic system having finite number of elements which is closed under a multiplication operation.
A nonempty subset $T = \{r_1, r_2, \dots$, $r_t\}$ of $F$ is called a \textit{set of pairwise non-commuting elements} if $r_ir_j \neq r_jr_i$ for all   $1 \leq i \neq j \leq t$. A set of pairwise non-commuting elements of $F$ is said to be of \textit{maximal size} if its cardinality is the largest one among all such sets.

This extends Definition 2.1 of \cite{ASA2007}. Considering $F$ to be a finite group Abdollahi et al. \cite{ASA2007} have obtained several results regarding set of pairwise non-commuting elements  having maximal size (see Remark 2.1, Lemma 2.4, Proposition 2.5 and Lemma 2.6 of \cite{ASA2007}). In this section, we consider $F$ to be a finite ring and obtain the following analogous results.

\begin{proposition}\label{intersection Z(R)}
Let $\{r_1, r_2, \dots, r_t\}$ be a set of pairwise non-commuting elements of $R$ having maximal size. Then 
\bnum
\item $R = C_R(r_1) \cup C_R(r_2) \cup \dots \cup C_R(r_t)$ 
\item $\overset{t}{\underset{i = 1}{\cap}}C_R(r_i) = Z(R)$.
\item $\{C_R(r_i) : i = 1, 2, \dots, t\}$ is an irredundant cover  of $R$.
\enum
\end{proposition}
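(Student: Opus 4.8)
The plan is to treat the three parts in the order (a), (c), (b), since (a) is essentially immediate, (c) reuses it, and (b) is the only part requiring a real idea. Throughout I assume $R$ is non-commutative (otherwise $t=1$ and the statement degenerates), so that $t\ge 2$. For (a): if some $x\in R$ lay outside every $C_R(r_i)$, then $xr_i\ne r_ix$ for all $i$ (and in particular $x\ne r_i$), so $\{r_1,\dots,r_t,x\}$ would be a set of pairwise non-commuting elements of size $t+1$, contradicting maximality; hence $R=\bigcup_{i=1}^t C_R(r_i)$.

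For (c): each $C_R(r_i)$ is a proper additive subgroup, for otherwise $r_i\in Z(R)$ and $r_i$ would commute with every $r_j$. By (a) the family $\{C_R(r_i):i=1,\dots,t\}$ is a cover of $R$. It is irredundant because $r_i\notin C_R(r_j)$ whenever $j\ne i$ (as $r_ir_j\ne r_jr_i$); thus $r_i$ belongs to no member of the family except $C_R(r_i)$, so deleting $C_R(r_i)$ destroys the covering property, and in particular no proper subfamily can cover $R$.

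For (b): the inclusion $Z(R)\subseteq\bigcap_{i=1}^t C_R(r_i)$ is clear. For the converse, suppose $x\in\bigcap_i C_R(r_i)$ but $x\notin Z(R)$, and fix $y\in R$ with $xy\ne yx$. Set $I=\{i: yr_i=r_iy\}$. The crucial point is that, since $x$ commutes with every $r_i$, replacing each $r_i$ with $i\in I$ by $x+r_i$ produces another set of pairwise non-commuting elements of size $t$, namely $T'=\{x+r_i:i\in I\}\cup\{r_j:j\notin I\}$: a direct computation using $xr_k=r_kx$ gives $(x+r_i)(x+r_j)-(x+r_j)(x+r_i)=r_ir_j-r_jr_i\ne 0$ for $i\ne j$, and likewise $(x+r_i)r_j-r_j(x+r_i)=r_ir_j-r_jr_i\ne 0$. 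Now $y$ fails to commute with every member of $T'$: for $r_j$ with $j\notin I$ this holds by the definition of $I$, while for $x+r_i$ with $i\in I$ we have $y(x+r_i)-(x+r_i)y=yx-xy\ne 0$ because $yr_i=r_iy$. Finally $y\notin T'$, since every element of $T'$ commutes with $x$ whereas $y$ does not. Hence $T'\cup\{y\}$ is a set of pairwise non-commuting elements of size $t+1$, a contradiction; therefore $x\in Z(R)$ and $\bigcap_i C_R(r_i)=Z(R)$.

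The only genuine obstacle is part (b): adjoining $y$ directly to $\{r_1,\dots,r_t\}$ fails exactly when $y$ happens to commute with some of the $r_i$, and the remedy is to perturb precisely those $r_i$ by the element $x$ — which, commuting with all the $r_i$, leaves every commutator $r_ir_j-r_jr_i$ intact while simultaneously breaking the commutation of $y$ with each perturbed element.
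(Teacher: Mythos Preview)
Your proof is correct and follows essentially the same approach as the paper: part (a) is identical, part (c) uses the same witness $r_i\in C_R(r_i)\setminus\bigcup_{j\ne i}C_R(r_j)$, and part (b) is exactly the paper's perturbation trick (their $r,s,y_i$ are your $x,y$ and the elements of $T'$). Your write-up is in fact slightly more careful than the paper's, e.g.\ in verifying the commutator identities and noting that $y\notin T'$.
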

\begin{proof}
(a) Suppose there exists an element $r \in R - (\overset{t}{\underset{i = 1}{\cup}}C_R(r_i))$, then $\{r_1, r_2$, $\dots, r_t, r\}$ is a set of $(t + 1)$-pairwise non-commuting elements of $R$, which is a contradiction. Hence $R = \overset{t}{\underset{i = 1}{\cup}}C_R(r_i)$.

(b)  Suppose there exists an element $r \in (\overset{t}{\underset{i = 1}{\cap}}C_R(r_i)) - Z(R)$, then $rs \neq sr$ for some $s \in R$ . Now for each $i = 1, 2, \dots, t$, we define $y_i = r_i$ if $r_is \neq sr_i$ and $y_i = r + r_i$ if $r_is = sr_i$. Then $\{y_1, y_2, \dots, y_t, s\}$ is a set of $(t + 1)$-pairwise non-commuting elements of $R$, which is a contradiction. Hence, $\overset{t}{\underset{i = 1}{\cap}}C_R(r_i) = Z(R)$.

(c) If not, then there exists a proper non-commutative centralizer $C_R(r_i)$ for some $i, 1 \leq i \leq t$ such that $C_R(r_i) \subseteq \overset{t}{\underset{j = 1, i \neq j}{\cup}} C_R(r_j)$. This contradicts the hypothesis that $\{r_1, r_2, \dots, r_t\}$ is a set of pairwise non-commuting elements of $R$.   
\end{proof}

\begin{proposition}\label{thm1}
Let $\{r_1, r_2, \dots, r_t\}$ be a set of pairwise non-commuting elements of a ring $R$ having maximal size. Then 
\bnum
\item $t \geq 3$ 
\item $t + 1 \leq |\Cent(R)|$
\item If $R$ is a ring such that every proper centralizer is commutative then for all $a, b \in R - Z(R)$ either $C_R(a) = C_R(b)$ or $C_R(a) \cap C_R(b) = Z(R)$.
\enum
\end{proposition}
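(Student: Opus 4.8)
The plan is to establish the three parts essentially in sequence, leaning on Proposition~\ref{intersection Z(R)} and the two results of Tomkinson.

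\textbf{Part (a).} Since $R$ is non-commutative, $R - Z(R) \neq \emptyset$, so a set of pairwise non-commuting elements of maximal size has $t \geq 1$. If $t = 1$, then $R = C_R(r_1)$ by Proposition~\ref{intersection Z(R)}(a), which is absurd since $R$ is non-commutative and $C_R(r_1)$ would equal $R$ only if $r_1 \in Z(R)$; more directly, $t=1$ forces $R$ commutative. If $t = 2$, then $R = C_R(r_1) \cup C_R(r_2)$ by Proposition~\ref{intersection Z(R)}(a); since a group cannot be the union of two proper subgroups, one of the $C_R(r_i)$ equals $R$, again forcing commutativity. Hence $t \geq 3$.

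\textbf{Part (b).} By Proposition~\ref{intersection Z(R)}(c), $\{C_R(r_i) : 1 \le i \le t\}$ is an irredundant cover of $R$, so the centralizers $C_R(r_1), \dots, C_R(r_t)$ are $t$ \emph{distinct} proper centralizers (distinct because irredundancy prevents any one from being contained in another, hence from being equal to another). Since $Z(R) = C_R(z)$ for any $z \in Z(R)$ is itself a centralizer and is not among these $t$ (each $C_R(r_i)$ is proper but strictly contains $Z(R)$ as $r_i \notin Z(R)$ yet $r_i \in C_R(r_i)$, whereas $Z(R) \ne C_R(r_i)$), we get at least $t+1$ distinct members of $\Cent(R)$, i.e. $t + 1 \le |\Cent(R)|$.

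\textbf{Part (c).} This is the part I expect to require the most care, though it is short. Assume every proper centralizer of $R$ is commutative and let $a, b \in R - Z(R)$. Then $C_R(a)$ and $C_R(b)$ are proper (as $a, b \notin Z(R)$), hence commutative subrings. Suppose $C_R(a) \neq C_R(b)$; I must show $C_R(a) \cap C_R(b) = Z(R)$. The inclusion $Z(R) \subseteq C_R(a) \cap C_R(b)$ is immediate. For the reverse, take $x \in C_R(a) \cap C_R(b)$. If $x \notin Z(R)$, then $C_R(x)$ is also a proper commutative centralizer; since $a \in C_R(x)$ and $x \in C_R(a)$, and $C_R(x)$ is commutative, every element commuting with $x$ — in particular everything in $C_R(a)$, since $a$ and $x$ commute and $C_R(x)$ is commutative — actually I should argue via: $C_R(a) \subseteq C_R(x)$ would follow if $a$ were ``central enough'' in $C_R(x)$. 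The clean route: because $C_R(x)$ is commutative and $a \in C_R(x)$, every element of $C_R(x)$ commutes with $a$, so $C_R(x) \subseteq C_R(a)$; symmetrically $C_R(x) \subseteq C_R(b)$; and since $x \in C_R(a)$ with $C_R(a)$ commutative, every element of $C_R(a)$ commutes with $x$, giving $C_R(a) \subseteq C_R(x)$, hence $C_R(a) = C_R(x) = C_R(b)$, contradicting $C_R(a) \neq C_R(b)$. Therefore $x \in Z(R)$, completing the proof. The main obstacle is getting these centralizer inclusions stated in the right order so that commutativity of the \emph{proper} centralizer $C_R(x)$ is genuinely used; once that bookkeeping is right, (c) drops out.
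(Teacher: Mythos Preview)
Your argument for part (b) contains a genuine slip: you write ``$Z(R) = C_R(z)$ for any $z \in Z(R)$,'' but in fact $C_R(z) = R$ whenever $z \in Z(R)$, not $Z(R)$. Indeed $Z(R)$ is \emph{never} of the form $C_R(x)$ in a non-commutative ring, since for $x \notin Z(R)$ we have $x \in C_R(x) \setminus Z(R)$, while for $x \in Z(R)$ we have $C_R(x) = R$. The fix is immediate: the $(t+1)$-st centralizer is $R$ itself, which is distinct from each proper $C_R(r_i)$; this is exactly what the paper uses (implicitly). Note also that your appeal to irredundancy to get distinctness of the $C_R(r_i)$ is correct but unnecessary: if $C_R(r_i) = C_R(r_j)$ with $i \neq j$ then $r_j \in C_R(r_i)$, contradicting that $r_i$ and $r_j$ do not commute.

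For part (a) you take a genuinely different route from the paper. The paper simply exhibits a $3$-element pairwise non-commuting set directly: pick $r,s$ with $rs \neq sr$ and observe that $\{r, s, r+s\}$ works. Your argument instead rules out $t=1$ and $t=2$ by invoking the covering property from Proposition~\ref{intersection Z(R)}(a) together with the fact that a group is not the union of two proper subgroups. Both are valid; the paper's construction is more self-contained and exploits the additive structure explicitly, while yours is slicker but leans on an extra (standard) group-theoretic fact.

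Part (c) is essentially the same argument as the paper's, just with more commentary; the paper presents it in the direction ``assume $C_R(a) \cap C_R(b) \neq Z(R)$, pick $c$ in the difference, and show $C_R(a) = C_R(c) = C_R(b)$,'' which is exactly what you arrive at once you settle the bookkeeping.
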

\begin{proof}
(a) Given $R$ is not commutative, so there exist $r, s \in R - Z(R)$ such that $rs \neq sr$. Thus $\{r, s, r + s\}$ is a set of pairwise non-commuting elements of $R$ and so $t \geq 3$.

(b) Given $r_1, r_2, \dots, r_t$ is a set of pairwise non-commuting elements of $R$, so $C_R(r_1), C_R(r_2), \dots, C_R(r_t)$ all are distinct. Hence, $t + 1 \leq |\Cent(R)|$.

(c) Suppose for some $a, b \in R - Z(R)$ we have $C_R(a) \cap C_R(b) \neq Z(R)$. Then there exists an element $c \in C_R(a) \cap C_R(b) - Z(R)$. Then $a, b \in C_R(c)$ and so for any $r \in C_R(a)$ we have $r \in C_R(c)$, since $C_R(a)$ is commutative. Therefore $C_R(a) \subseteq C_R(c)$. Similarly, it can be seen that $C_R(c) \subseteq C_R(a), C_R(b) \subseteq C_R(c)$ and $C_R(c) \subseteq C_R(b)$. Hence $C_R(c) = C_R(a) = C_R(b)$. 
\end{proof}

\begin{proposition}\label{thm3}
Let $R$ be an $n$-centralizer ring such that the cardinality of pairwise non-commuting elements of $R$ is $n - 1$. Then for every non-central elements $r$ and $s$ of $R$, $C_R(r) = C_R(s)$ or $C_R(r) \cap C_R(s) = Z(R)$. 
\end{proposition}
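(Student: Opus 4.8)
The plan is to combine the counting hypothesis $|\Cent(R)| = n$ with the maximality of the given pairwise non-commuting set in order to show that the centralizers arising from that set are \emph{exactly} all the proper centralizers of $R$, after which a short commutation argument finishes things off.

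First I would fix a set $\{r_1, r_2, \dots, r_{n-1}\}$ of pairwise non-commuting elements of maximal size (by hypothesis this maximal size is $n-1$) and apply Proposition \ref{intersection Z(R)} to record that $\{C_R(r_1), \dots, C_R(r_{n-1})\}$ is an irredundant cover of $R$ whose intersection is $Z(R)$. These centralizers are pairwise distinct (if $C_R(r_i) = C_R(r_j)$ with $i \neq j$, then $r_j \in C_R(r_j) = C_R(r_i)$ would force $r_ir_j = r_jr_i$) and each is proper since $r_i \notin Z(R)$. Since $R = C_R(z) \in \Cent(R)$ for any $z \in Z(R)$, the collection $\{R, C_R(r_1), \dots, C_R(r_{n-1})\}$ consists of $n$ pairwise distinct centralizers, and as $|\Cent(R)| = n$ it must be all of $\Cent(R)$. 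Hence every proper centralizer of $R$ equals some $C_R(r_i)$, so for each non-central $x \in R$ there is an index $i(x)$ with $C_R(x) = C_R(r_{i(x)})$.

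Next, take $r, s \in R - Z(R)$. If $i(r) = i(s)$ then $C_R(r) = C_R(s)$ and we are done, so assume $C_R(r) = C_R(r_i)$ and $C_R(s) = C_R(r_j)$ with $i \neq j$; it remains to show $C_R(r_i) \cap C_R(r_j) = Z(R)$. Since $Z(R) \subseteq C_R(r_i) \cap C_R(r_j)$ always, suppose for contradiction there is $c \in (C_R(r_i) \cap C_R(r_j)) - Z(R)$. Then $r_i, r_j \in C_R(c)$, and $C_R(c)$ is proper since $c \notin Z(R)$, so $C_R(c) = C_R(r_k)$ for some $k$. From $r_i \in C_R(c) = C_R(r_k)$ we get $r_ir_k = r_kr_i$, which forces $k = i$ because the $r_\ell$ are pairwise non-commuting; but then $r_j \in C_R(c) = C_R(r_i)$ gives $r_ir_j = r_jr_i$, contradicting $i \neq j$. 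Hence $C_R(r_i) \cap C_R(r_j) = Z(R)$, and therefore $C_R(r) \cap C_R(s) = Z(R)$.

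I do not anticipate a serious obstacle: the only step requiring genuine care is the counting argument, namely confirming that the $n-1$ centralizers coming from a maximal pairwise non-commuting set, together with $R$ itself, already exhaust all $n$ members of $\Cent(R)$. Once that identification is in hand, the commutation contradiction is immediate.
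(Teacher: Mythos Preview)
Your proof is correct. Both your argument and the paper's begin identically by showing that $\Cent(R) = \{R, C_R(r_1), \dots, C_R(r_{n-1})\}$, so that every proper centralizer coincides with some $C_R(r_i)$. The paper then takes an intermediate step you bypass: it first proves that every proper centralizer $C_R(r_i)$ is commutative (by assuming $a,b \in C_R(r_i)$ with $ab \neq ba$, noting $C_R(a)$ must equal some $C_R(r_j)$ with $j \neq i$, and deriving $r_i \in C_R(r_j)$), and then simply invokes Proposition~\ref{thm1}(c). Your route is slightly more direct: rather than establishing commutativity of all proper centralizers and appealing to that general lemma, you run essentially the same contradiction argument directly on a hypothetical $c \in (C_R(r_i) \cap C_R(r_j)) \setminus Z(R)$. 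The trade-off is that the paper's version yields the extra structural fact ``every proper centralizer is commutative'' along the way, while yours is more self-contained and avoids the detour through Proposition~\ref{thm1}(c).
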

\begin{proof}
Let $\{r_1, r_2, \dots, r_{n - 1}\}$ be a set of pairwise non-commuting elements of $R$. Then $C_R(r_1), C_R(r_2), \dots, C_R(r_{n - 1})$ are all distinct and so by Proposition \ref{thm1}(b), $\Cent(R) = \{R, C_R(r_1), C_R(r_2), \dots, C_R(r_{n - 1})\}$. Suppose for some $i, 1 \leq i \leq n - 1, C_R(r_i)$ is non-commutative. So there exist elements $a, b \in C_R(r_i)$ such that $ab \neq ba$. So $C_R(a) \neq R, C_R(b), C_R(r_i)$. Without any loss, we can assume that $C_R(a) = C_R(r_j)$ for some $j \neq i, 1 \leq j \leq (n - 1)$. This gives $r_i \in C_R(a) = C_R(r_j)$ and so $r_ir_j = r_jr_i$, a contradiction. Hence every proper centralizer is commutative. So by Proposition \ref{thm1}(c), we have the required result. 
\end{proof}





We conclude this section by a new characterization of $4,5$-centralizer finite rings which is analogous to \cite[Lemma 2.4]{ASA2007}. 
The following lemma is useful in proving Theorem \ref{thm2}.
\begin{lemma}\label{C(x)commutative}
Let $R$ be a ring and $|R : Z(R)| = 4$ or $8$. Then $C_R(x)$ is commutative for all $x \in R - Z(R)$.
\end{lemma}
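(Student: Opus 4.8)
The plan is to exploit the chain of additive subgroups
\[
Z(R) \;\subseteq\; Z(C_R(x)) \;\subseteq\; C_R(x) \;\subseteq\; R,
\]
together with the fact recalled in Section~2 (from \cite[Lemma~1]{dmachale}) that a non-commutative ring $S$ can never have $S/Z(S)$ cyclic. Fix $x \in R - Z(R)$ and put $S = C_R(x)$, which is a subring of $R$.

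First I would collect the elementary facts that make the two outer inclusions in the chain strict. Every central element of $R$ commutes with everything in $R$, hence with everything in $S$, so $Z(R) \subseteq S$ and in fact $Z(R) \subseteq Z(S)$. Since $x \notin Z(R)$ there is $r \in R$ with $xr \neq rx$; this $r$ lies outside $S$, so $S \subsetneq R$ and $|R : S| \geq 2$. On the other hand $x \in S$ and $x$ commutes by definition with every element of $S$, so $x \in Z(S)$; as $x \notin Z(R)$ this gives $Z(R) \subsetneq Z(S)$, whence $|Z(S) : Z(R)| \geq 2$.

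Now I would read off the conclusion from the tower law $|R : Z(R)| = |R : S|\cdot|S : Z(S)|\cdot|Z(S) : Z(R)|$ for the additive groups. When $|R : Z(R)| = 4$ the two boundary factors are each $\geq 2$, so $|S : Z(S)| = 1$, i.e.\ $S$ is commutative. When $|R : Z(R)| = 8$ the same count forces $|S : Z(S)| \leq 2$, so $S/Z(S)$ is cyclic; by the quoted lemma of \cite{dmachale} a non-commutative ring has non-cyclic central quotient, hence $S$ is commutative in this case as well. Either way $C_R(x)$ is commutative.

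I do not anticipate a real obstacle; the one step that must not be skipped is the observation $x \in Z(C_R(x))$, which is exactly what pushes $|C_R(x) : Z(C_R(x))|$ down to at most $2$ even when $|R : Z(R)| = 8$ and $|C_R(x):Z(R)| = 4$. Without it the crude index count alone would leave open the possibility $C_R(x)/Z(R) \cong \Z_2 \times \Z_2$, which is consistent with $C_R(x)$ being non-commutative (recall that $|R:Z(R)| = 4$ does \emph{not} force $R$ itself to be commutative).
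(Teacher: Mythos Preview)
Your argument is correct and follows exactly the paper's route: the chain $Z(R)\subseteq Z(C_R(x))\subseteq C_R(x)\subsetneq R$ together with the index (equivalently, third-isomorphism) computation forces $|C_R(x):Z(C_R(x))|\leq 2$, and then MacHale's cyclic-quotient lemma finishes. If anything, you are more explicit than the paper, which asserts the strict inclusions without spelling out that $x\in Z(C_R(x))\setminus Z(R)$.
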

\begin{proof}
 For any $x \in R - Z(R)$ we know that, $Z(R) \subseteq Z(C_R(x)) \subseteq C_R(x) \subset R$. Therefore
\[
\frac{C_R(x)/Z(R)}{Z(C_R(x))/Z(R)} \cong \frac{C_R(x)}{Z(C_R(x))}
\]
and so $|\frac{C_R(x)}{Z(C_R(x))}| = 2$ or $1$. Hence $C_R(x)$ is commutative.
\end{proof}

\begin{thm}\label{thm2}
Let $\{r_1, r_2, \dots, r_t\}$ be a set of pairwise non-commuting elements of a ring $R$ having maximal size. Then 
\bnum
\item $|\Cent(R)| = 4$ if and only if $t = 3$, 
\item $|\Cent(R)| = 5$ if and only if $t = 4$.
\enum
\end{thm}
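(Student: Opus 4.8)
The plan is to prove each biconditional by establishing both implications, using Theorem \ref{4cent} to translate between $|\Cent(R)|$ and $|R:Z(R)|$, and using Propositions \ref{thm1} and \ref{thm3} together with the cover/irredundant-cover machinery of Tomkinson (Lemma \ref{tomlemma}, Theorem \ref{tomtheorem}) applied to the additive group $G = R$ and the subgroups $C_R(r_i)$.

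For (a), the ``only if'' direction: assume $|\Cent(R)| = 4$, so by Theorem \ref{4cent}(1) we have $|R:Z(R)| = 4$. By Proposition \ref{thm1}(b), $t \leq 3$, and by Proposition \ref{thm1}(a), $t \geq 3$, so $t = 3$. For the ``if'' direction: assume $t = 3$. By Lemma \ref{C(x)commutative} applied later we will not need it here; instead, observe that by Proposition \ref{intersection Z(R)} the sets $C_R(r_1), C_R(r_2), C_R(r_3)$ form an irredundant cover of the additive group $R$ with intersection $Z(R)$. A group cannot be the irredundant union of exactly two proper subgroups, and a group that is the irredundant union of exactly three proper subgroups must have all three of index $2$ (this is the classical fact behind Tomkinson's results; it also follows directly from Lemma \ref{tomlemma} with $M$ chosen among the $H_i$). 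Hence each $|R:C_R(r_i)| = 2$ and $R/Z(R) \cong \Z_2 \times \Z_2$, so $|R:Z(R)| = 4$, and Theorem \ref{4cent}(1) gives $|\Cent(R)| = 4$.

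For (b), the ``only if'' direction is identical in spirit: $|\Cent(R)| = 5$ gives $|R:Z(R)| = 9$ by Theorem \ref{4cent}(2), and Proposition \ref{thm1}(a),(b) force $t = 4$. For the ``if'' direction, assume $t = 4$; then $|\Cent(R)| \geq 5$ by Proposition \ref{thm1}(b), and it suffices to rule out $|\Cent(R)| \geq 6$. Here I would argue that with $t = n-1$ where $n = |\Cent(R)|$, Proposition \ref{thm3} applies, so every proper centralizer is commutative and distinct non-central centralizers meet in $Z(R)$; then $\{C_R(r_1),\dots,C_R(r_4)\}$ is an irredundant cover of $R$, and by Theorem \ref{tomtheorem}(a) with $n = 5$ we get $\alpha_2 \leq 3$, while $\alpha_1 \geq 2$ always. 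Combining this with the counting identity $|R| = |Z(R)| + \sum_{i=1}^{4}(|C_R(r_i)| - |Z(R)|)$ (valid because the $C_R(r_i)$ pairwise intersect exactly in $Z(R)$), i.e. $|R:Z(R)| = 1 + \sum_{i=1}^{4}(|R:Z(R)|/\alpha_i - 1)$, one checks that the only solution with $2 \leq \alpha_i \leq 3$ and $4 = t$ forcing the cover to be genuinely irredundant of size $4$ is $\alpha_1 = \cdots = \alpha_4 = 3$, giving $|R:Z(R)| = 9$ and hence $|\Cent(R)| = 5$ by Theorem \ref{4cent}(2). The main obstacle I anticipate is the bookkeeping in this last step: verifying that the Diophantine equation $m = 1 + \sum_{i=1}^4 (m/\alpha_i - 1)$ together with the divisibility constraints ($\alpha_i \mid m$, each $\alpha_i$ a possible subgroup index, $2 \le \alpha_i \le 3$) and the irredundancy of the $4$-element cover has no solution other than the all-$3$'s one — in particular one must exclude configurations mixing indices $2$ and $3$, where the presence of an index-$2$ subgroup in an irredundant cover sharply limits the number of members (at most $3$), contradicting $t=4$.
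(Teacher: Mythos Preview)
Your argument for (a) is correct and essentially matches the paper; the paper phrases the converse by plugging $n=3$ into Theorem~\ref{tomtheorem} to get $|R:Z(R)|\le 4$, but your ``three subgroups of index~$2$'' remark is equivalent.

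In (b), the forward direction is fine once you note explicitly that $t=3$ is excluded by part~(a) (Proposition~\ref{thm1} alone only gives $3\le t\le 4$). The real problem is the converse. You invoke Proposition~\ref{thm3} ``with $t=n-1$ where $n=|\Cent(R)|$'' to conclude that distinct proper centralizers meet in $Z(R)$, and then run the counting identity. But Proposition~\ref{thm3} has the hypothesis that the maximal non-commuting set has size exactly $|\Cent(R)|-1$; with $t=4$ this means $|\Cent(R)|=5$, which is precisely the conclusion you are trying to establish. So the appeal is circular, and without it your Diophantine equation $|R|=|Z(R)|+\sum_i(|C_R(r_i)|-|Z(R)|)$ is not justified. (A secondary issue: Theorem~\ref{tomtheorem} only bounds $\alpha_2$, not all $\alpha_i$, so ``$2\le\alpha_i\le 3$'' needs an extra step even once the intersection property is in hand.)

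The paper avoids this circularity by using the \emph{index} part of Theorem~\ref{tomtheorem} rather than just $\alpha_2\le n-1$: since the four $C_R(r_i)$ form an irredundant cover with intersection $Z(R)$, Theorem~\ref{tomtheorem}(b),(c) give $|R:Z(R)|\le\max\{(4-1)^2,(4-2)^3\}=9$ outright, so $|R:Z(R)|\in\{4,8,9\}$. The case $9$ is the desired one by Theorem~\ref{4cent}(b). To eliminate $4$ and $8$ the paper then uses Lemma~\ref{C(x)commutative}: when $|R:Z(R)|\in\{4,8\}$ every proper centralizer is commutative, so Proposition~\ref{thm1}(c) (not Proposition~\ref{thm3}) supplies the pairwise-$Z(R)$ intersection property; the product formula $|R|\ge |C_R(r_1)+C_R(r_2)|=|C_R(r_1)||C_R(r_2)|/|Z(R)|$ then forces $|R:Z(R)|=4$, hence $|\Cent(R)|=4$ and $t=3$, a contradiction. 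Your counting strategy can be repaired along exactly these lines: first bound $|R:Z(R)|$ via Tomkinson, then use Lemma~\ref{C(x)commutative} in the $2$-power cases to get the intersection property you need.
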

\begin{proof}
(a) Suppose $|\Cent(R)| = 4$. By  Proposition \ref{thm1}, we have that $t \geq 3$ and $t + 1 \leq |\Cent(R)|$ which gives $t \leq 3$. Hence $t = 3$. 

Conversely, suppose that $t = 3$. Then by Proposition \ref{intersection Z(R)} and Theorem \ref{tomtheorem}, we have 
\[
|R : Z(R)| \leq  \max \{(3 - 1)^2(3 - 3)!, (3 - 2)^3(3 - 3)!\} = 4
\]
 which gives $|R : Z(R)| = 4$. Hence, by  Theorem \ref{4cent}(a), it follows that $|\Cent(R)| = 4$.

(b) Suppose $|\Cent(R)| = 5$. By  Proposition \ref{thm1}, we have that $t \geq 3$ and $t + 1 \leq |\Cent(R)|$ which gives $t = 3$ or $4$. If $t = 3$ then by part (a), we have $|\Cent(R)| = 4$, a contradiction. Hence $t = 4$.
Conversely, suppose that $t = 4$. Then by Proposition \ref{intersection Z(R)} and Theorem \ref{tomtheorem}, we have 
\[
|R : Z(R)| \leq  \max \{(4 - 1)^2(4 - 3)!, (4 - 2)^3(4 - 3)!\} = 9
\]
 which gives $|R : Z(R)| = 4, 8$ or $9$. Suppose $|R : Z(R)| = 4$ or $8$ then by Lemma \ref{C(x)commutative} we have  $C_R(r)$ is commutative for any $r \in R - Z(R)$. So, by Proposition \ref{thm1} we have  $C_R(r) \cap C_R(s) = Z(R)$ for any $r, s \in R - Z(R)$ and $r \neq s$. Also we have that for any $r \in R - Z(R)$, $|R : C_R(r)|$ divides $|R : Z(R)|$. So if $\beta_2 = 3$ then $3$ divides $|R : Z(R)|$, a contradiction. Hence $\beta_2 = 2 = \beta_1$. Therefore $|R| \geq |C_R(r_1) + C_R(r_2)|$ and so $|R| \geq \frac{(|R|/2)(|R|/2)}{|Z(R)|}$ which gives $|R : Z(R)| = 4$. Hence by Theorem \ref{4cent} (a), we have $|\Cent(R)| = 4$ and so by part(a), $t = 3$, a contradiction. Therefore $|R : Z(R)| = 9$, so by Theorem \ref{4cent} (b) we have $|\Cent(R)| = 5$.
This completes the theorem.
\end{proof}

\section{$6$-centralizer finite rings}
In this section, we have the following  characterization of $6$-centralizer finite rings.
\begin{thm}\label{6cent}
Let $R$ be a $6$-centralizer finite ring. Then $|R : Z(R)| = 8, 12$ or $16$.
\end{thm}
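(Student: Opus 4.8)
The plan is to follow the same template that produced Theorem~\ref{thm2}, namely: first pin down the size $t$ of a maximal set of pairwise non-commuting elements, then feed the resulting irredundant cover of $R$ into Tomkinson's Theorem~\ref{tomtheorem} to bound $|R:Z(R)|$, and finally rule out the spurious values of the index. By Proposition~\ref{thm1} we have $t+1 \le |\Cent(R)| = 6$, so $t \le 5$, and combining with Theorem~\ref{thm2} (which says $|\Cent(R)|=4 \iff t=3$ and $|\Cent(R)|=5 \iff t=4$) forces $t = 5$, since $t=3$ or $t=4$ would give $|\Cent(R)|\in\{4,5\}$.

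Next I would apply Proposition~\ref{intersection Z(R)}: the centralizers $C_R(r_1),\dots,C_R(r_5)$ form an irredundant cover of $R$ with $\bigcap_i C_R(r_i) = Z(R)$. Writing $\alpha_i = |R:C_R(r_i)|$ with $\alpha_1 \le \dots \le \alpha_5$, Theorem~\ref{tomtheorem}(a) gives $\alpha_2 \le 4$, and parts (b),(c) give
\[
|R:Z(R)| \le \max\{(5-1)^2(5-3)!,\ (5-2)^3(5-3)!\} = \max\{32, 54\} = 54.
\]
Since each $\alpha_i$ divides $|R:Z(R)|$ and $\alpha_1 \ge 2$, and since $\frac{R}{Z(R)}$ is non-cyclic, the only candidate indices to examine are those $m$ with $2 \le m \le 54$ that can be covered by $5$ proper subgroups irredundantly; in particular $m$ must admit an irredundant $5$-cover, and the additive group of order $m$ must be non-cyclic. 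The bulk of the argument is then a pruning of this candidate list down to $\{8, 12, 16\}$.

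The main obstacle — and the step I expect to require the most care — is eliminating all the intermediate indices between $8$ and $54$ other than $12$ and $16$ (for instance $18, 24, 27, 32, 36, 48, 54$, as well as smaller non-cyclic-group orders like $9$ that are already excluded by Theorem~\ref{thm2}(b) since $|R:Z(R)|=9 \Rightarrow |\Cent(R)|=5$). I would handle this by a combination of: (i) the divisibility constraint $\alpha_i \mid |R:Z(R)|$ together with $\alpha_1 = \min_i \alpha_i \ge 2$; (ii) Lemma~\ref{tomlemma}, which when $\alpha_1 = 5$ forces all five indices equal to $5$ and pairwise intersections inside $Z(R)$, leading to a counting bound $|R| \ge |C_R(r_1)+C_R(r_2)| = \frac{(|R|/5)^2}{|Z(R)|}$, i.e. $|R:Z(R)| \le 25$, and more generally similar inclusion–exclusion/counting estimates $|R| \ge \sum (|R:C_R(r_i)|^{-1}|R|) - (\text{overlaps})$ for the irredundant cover; and (iii) Lemma~\ref{C(x)commutative}-style arguments showing that for the surviving small indices every proper centralizer is commutative, so that Proposition~\ref{thm1}(c) applies and the distinct proper centralizers are exactly $C_R(r_1),\dots,C_R(r_5)$ pairwise meeting in $Z(R)$ — which then pins the index precisely. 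Assembling these constraints should leave exactly $|R:Z(R)| \in \{8, 12, 16\}$.

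One should also double-check that the cover being \emph{irredundant} (not merely a cover) is genuinely used: it is what licenses Theorem~\ref{tomtheorem} rather than only Lemma~\ref{tomlemma}, and it is what guarantees $|\Cent(R)|$ equals exactly $t+1 = 6$ rather than something larger, closing the loop with the hypothesis. I would present the elimination of indices as a short case analysis organized by the value of $\alpha_1 \in \{2,3,4,5\}$, since $\alpha_1$ both controls which divisors of $|R:Z(R)|$ can occur and, via Lemma~\ref{tomlemma}, gives the sharpest counting bound in the boundary case $\alpha_1 = 5$.
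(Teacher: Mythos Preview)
Your outline correctly identifies $t=5$ and correctly invokes Proposition~\ref{intersection Z(R)} and Theorem~\ref{tomtheorem}, but it misses the single observation that makes the paper's argument work and that your pruning plan cannot replace. Because $t = |\Cent(R)|-1$, Proposition~\ref{thm3} applies directly: every proper centralizer is commutative, and for any two non-central $r,s$ either $C_R(r)=C_R(s)$ or $C_R(r)\cap C_R(s)=Z(R)$. That immediately upgrades your vague ``inclusion--exclusion/counting estimates'' to the \emph{exact} identity
\[
|R| \;=\; \sum_{i=1}^{5}|S_i| \;-\; 4|Z(R)|,
\]
with $S_i=C_R(r_i)$, and the whole proof then becomes a short case split on $|R:S_1|\in\{2,3,4\}$ together with the product bound $|R|\ge |S_1+S_2|=|S_1|\,|S_2|/|Z(R)|$ to control $|S_2|$. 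The values $8,12,16$ drop out of this equation; Tomkinson's bound $|R:Z(R)|\le 54$ is never used and is far too coarse.

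By contrast, your route leaves you with all non-cyclic indices up to $54$ and a pruning plan that is not actually carried out. Item~(iii) in your plan is circular: Lemma~\ref{C(x)commutative} only handles indices $4$ and $8$, so you cannot use it to establish commutativity of proper centralizers ``for the surviving small indices'' like $18,24,25,27,32,36,48,54$ --- whereas Proposition~\ref{thm3} gives commutativity \emph{a priori}, independent of the index. Item~(ii) contains a slip: with five subgroups, Lemma~\ref{tomlemma} (taking one as $M$ and $k=4$) already forces $\alpha_1\le 4$, so your case $\alpha_1=5$ is vacuous; your case list should be $\alpha_1\in\{2,3,4\}$, matching the paper's split on $|R:S_1|$. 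In short, the missing idea is to invoke Proposition~\ref{thm3} at the outset to get the exact partition formula; once you do that, the Tomkinson bound and the elaborate elimination become unnecessary.
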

\begin{proof}
Given $|\Cent(R)| = 6$. Let $\{r_1, r_2, \dots, r_t\}$ be a set of pairwise non-commuting elements of $R$ having maximal size. Then $C_R(r_i), 1 \leq i \leq t$ are distinct proper centralizers of $R$. Suppose $S_i = C_R(r_i), 1 \leq i \leq t$ and $|S_1| \geq |S_2| \geq \dots \geq |S_t|$. Using Theorem \ref{thm2}, we have if $t = 3$ then $|\Cent(R)| = 4$ and if $t = 4$ then $|\Cent(R)| = 5$. So in both cases we get contradictions. Hence using Proposition \ref{thm1} and Proposition \ref{thm3}, we have $t = 5$ and for every non-central elements $r$ and $s$ of $R$, $C_R(r) = C_R(s)$ or $C_R(r) \cap C_R(s) = Z(R)$. Therefore, we have that
\[
|R| = \overset{5}{\underset{i = 1}{\sum}}|S_i| - 4|Z(R)|.
\]
Suppose $|S_1| < \frac{|R|}{4}$ that is $|S_1| \leq \frac{|R|}{5}$ as $1 < |S_1| \leq \frac{|R|}{2}$. Then we have $|R| \leq |R| - 4|Z(R)|$ which gives $|R| < |R|$, a contradiction.
Hence $|S_1| = \frac{|R|}{2}, \frac{|R|}{3}$ or $\frac{|R|}{4}$.

\noindent \textbf{Case $1$:} $|S_1| = \frac{|R|}{2}$. So $|R| \geq |S_1 + S_2| = \frac{(|R|/2)|S_2|}{|Z(R)|}$ which gives $|S_2| \leq 2|Z(R)|$ and hence $|S_2| = 2|Z(R)|$. So $|S_3| = |S_4| = |S_5| = 2|Z(R)|$. Therefore $|R| = \frac{|R|}{2} + 8|Z(R)| - 4|Z(R)|$ which gives $|R : Z(R)| = 8$.

\noindent \textbf{Case $2$:} $|S_1| = \frac{|R|}{3}$. So $|R| \geq |S_1 + S_2| = \frac{(|R|/3)|S_2|}{|Z(R)|}$ which gives $|S_2| \leq 3|Z(R)|$.

\textit{Subcase $(i)$:} $|S_2| = 2|Z(R)|$. Therefore $|S_3| = |S_4| = |S_5| = 2|Z(R)|$ and so $|R| = \frac{|R|}{3} + 2|Z(R)| + |S_3| + |S_4| + |S_5| - 4|Z(R)|$ which gives $|R : Z(R)| = 6$, a contradiction.

\textit{Subcase $(ii)$:} $|S_2| = 3|Z(R)|$. Therefore $|R| = \frac{|R|}{3} + 3|Z(R)| + |S_3| + |S_4| + |S_5| - 4|Z(R)|$ which gives $\frac{2|R|}{3} < |S_3| + |S_4| + |S_5| \leq 9|Z(R)|$. Therefore $|R| \leq 13|Z(R)|$ and hence using Theorem \ref{4cent}, $|R : Z(R)| = 8, 12$.

\noindent \textbf{Case $3$:} $|S_1| = \frac{|R|}{4}$. So $|R| \geq |S_1 + S_2| = \frac{(|R|/4)|S_2|}{|Z(R)|}$ which gives $|S_2| \leq 4|Z(R)|$.

\textit{Subcase $(i)$:} $|S_2| = 2|Z(R)|$. Therefore $|R| = \frac{|R|}{4} + 2|Z(R)| + |S_3| + |S_4| + |S_5| - 4|Z(R)|$ which gives $\frac{3|R|}{4} < |S_3| + |S_4| + |S_5| = 6|Z(R)|$. Therefore $|R| < 8|Z(R)|$, a contradiction.

\textit{Subcase $(ii)$:} $|S_2| = 3|Z(R)|$. Therefore $|R| = \frac{|R|}{4} + 3|Z(R)| + |S_3| + |S_4| + |S_5| - 4|Z(R)|$ which gives $\frac{3|R|}{4} < |S_3| + |S_4| + |S_5| \leq 9|Z(R)|$. Therefore $|R| < 12|Z(R)|$ and hence $|R : Z(R)| = 8$.

\textit{Subcase $(iii)$:} $|S_2| = 4|Z(R)|$. Therefore $|R| = \frac{|R|}{4} + 4|Z(R)| + |S_3| + |S_4| + |S_5| - 4|Z(R)|$ which gives $\frac{3|R|}{4} \leq 12|Z(R)|$. Therefore $|R| \leq 16|Z(R)|$ and hence $|R : Z(R)| = 8, 12, 16$.
 Hence the theorem follows.
\end{proof}

We conclude this section by the following theorem which  shows  that the converse of Theorem \ref{6cent} is not true.  
\begin{thm}
Let $R$ be a non-commutative ring. If $\frac{R}{Z(R)} \cong \Z_2 \times \Z_2 \times \Z_2$ then $|\Cent(R)| = 6$ or $8$. 
\end{thm}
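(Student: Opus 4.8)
The plan is to analyze the additive quotient group $G := R/Z(R)$, which by hypothesis is isomorphic to $\Z_2 \times \Z_2 \times \Z_2$, and to understand how centralizers of $R$ sit inside this group. First I would observe that for any $x \in R$, the centralizer $C_R(x)$ is an additive subgroup of $R$ containing $Z(R)$, so its image $\overline{C_R(x)} := C_R(x)/Z(R)$ is a subgroup of $G$. Since $|G| = 8$, the proper subgroups of $G$ containing no... more precisely, the subgroups of $\Z_2\times\Z_2\times\Z_2$ have orders $1, 2, 4, 8$, and since $R$ is non-commutative, no proper centralizer can have image all of $G$ (that would force $C_R(x) = R$). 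Also $\overline{C_R(x)}$ is never trivial for $x \notin Z(R)$? Actually it contains $\bar x \neq 0$, so it has order at least $2$. Thus each proper centralizer has image of order $2$ or $4$ in $G$.

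Next I would set up the counting. Let $\{r_1,\dots,r_t\}$ be a set of pairwise non-commuting elements of maximal size. By Proposition \ref{intersection Z(R)}, $\{C_R(r_i)\}$ is an irredundant cover of $R$ with intersection $Z(R)$, so $\{\overline{C_R(r_i)}\}$ is an irredundant cover of $G$ by proper subgroups with trivial intersection. Now I would invoke the (elementary, well-known) classification of irredundant covers of $\Z_2\times\Z_2\times\Z_2$ by proper subgroups: the only such covers are (up to automorphism) the cover by the three index-$2$ subgroups through a fixed line, giving... — actually I should enumerate carefully. An irredundant cover of $G$ by subgroups of order $2$ and $4$: the seven order-$2$ subgroups cover $G$, but that is far from irredundant; a cover by the four order-$4$ subgroups through a common line is not irredundant either unless trimmed. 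The key point is that the number of members of a minimal (irredundant) cover of $\Z_2^3$ is either $3$ (three order-$4$ subgroups forming a "hyperplane pencil" through a line, whose union is all of $G$ — wait, three hyperplanes through a line cover $3\cdot 4 - 3\cdot 2 + 2 = 8$ points, yes) or larger configurations using order-$2$ subgroups. I would check that the possible sizes of an irredundant cover by proper subgroups are exactly $3$ and, after a short case analysis, certain larger values; combined with Proposition \ref{thm1}(b) ($t+1 \le |\Cent(R)|$) and Theorem \ref{thm2} (if $t=3$ then $|\Cent(R)|=4$, but $t=3$ would force $|R:Z(R)|=4 \neq 8$, contradiction; similarly $t=4$ gives $|R:Z(R)|=9$, contradiction), I can rule out $t = 3, 4$. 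So $t \geq 5$, and the irredundant-cover classification of $\Z_2^3$ should pin down $t$.

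Then I would count $|\Cent(R)|$ exactly. Having determined $t$ (I expect $t = 5$ or $t = 7$ depending on whether two order-$4$ images appear or all images have order $2$), I need to account for all centralizers, not just the $C_R(r_i)$ and $R$: I must show every centralizer $C_R(x)$ equals $R$ or one of the $C_R(r_i)$, or coincides with some other subgroup forced by the configuration. Here Proposition \ref{thm3} is the right tool once I know the cardinality of a maximal pairwise-non-commuting set equals $|\Cent(R)| - 1$ — but that is what I am trying to establish, so instead I would argue directly: for $x \notin Z(R)$, $\overline{C_R(x)}$ is a subgroup of $G$ of order $2$ or $4$ that must be contained in the union of the $\overline{C_R(r_i)}$, and in the elementary abelian setting a subgroup contained in a union of subgroups from our cover must be contained in one of them (by a Baer-type argument, since $\Z_2^3$ has no "messy" subgroup overlaps), forcing $C_R(x) \subseteq C_R(r_i)$, hence $C_R(x) = C_R(r_i)$ by maximality-of-size considerations as in the proof of Proposition \ref{thm3}. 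Counting the distinct subgroups arising gives $|\Cent(R)| = t + 1 \in \{6, 8\}$.

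The main obstacle I anticipate is the combinatorial classification of which configurations of subgroups of $\Z_2\times\Z_2\times\Z_2$ can occur as $\{\overline{C_R(r_i)}\}$: I need to show that the image subgroups, being pairwise "non-commuting" in the sense that no $\bar r_i$ lies in $\overline{C_R(r_j)}$ for $i \neq j$, together with irredundancy, admit only the two numerical outcomes $t+1 = 6$ and $t+1 = 8$, and that intermediate values such as $7$ are impossible. Concretely, $t = 6$ would require six proper subgroups of $\Z_2^3$ irredundantly covering it with the pairwise condition, and I would need a short argument (probably by counting points covered: $a$ subgroups of order $4$ and $b$ of order $2$ with $a+b = t$, inclusion–exclusion over the trivial intersection, i.e. $\sum(|\overline{C_R(r_i)}| - 1) = |G| - 1 = 7$ when the pairwise intersections are all trivial) ruling it out. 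Indeed $\sum(|\overline{C_R(r_i)}| - 1) = 3a + b = 7$ forces $(a,b) = (2,1)$ giving $t = 3$ — too small and excluded above — or $(a,b) = (1,4)$ giving $t = 5$, or $(a,b)=(0,7)$ giving $t = 7$; since $a$ hyperplanes through a line contribute overlaps, the clean count applies only when all pairwise intersections equal $Z(R)$, which Proposition \ref{thm3} guarantees once proper centralizers are commutative — so I would first dispatch the commutativity of proper centralizers (as in the proof of Proposition \ref{thm3}) and then the arithmetic $3a + b = 7$ with $a + b = t$ yields exactly $t \in \{5, 7\}$, hence $|\Cent(R)| = 6$ or $8$.
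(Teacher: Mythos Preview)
Your approach via a maximal pairwise non-commuting set and the partition count in $R/Z(R)$ is workable, but two steps need repair before it goes through. First, you cannot establish commutativity of the proper centralizers ``as in the proof of Proposition~\ref{thm3}'': that proof assumes $|\Cent(R)|=t+1$, which is exactly what you are trying to show, so the argument is circular. The fix is to invoke Lemma~\ref{C(x)commutative} directly, which applies since $|R:Z(R)|=8$. Second, your ``Baer-type'' claim---that a subgroup of $\Z_2^3$ contained in the union of the $\overline{C_R(r_i)}$ must lie inside one of them---is false in this setting (an order-$4$ subgroup can meet one order-$4$ member of the partition in a line and two order-$2$ members in single points). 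The correct deduction of $C_R(x)=C_R(r_i)$ is immediate once you have commutativity: $x\in C_R(r_j)$ for some $j$, whence $C_R(r_j)\subseteq C_R(x)$ (as $C_R(r_j)$ is commutative) and $C_R(x)\subseteq C_R(r_j)$ (as $C_R(x)$ is commutative and contains $r_j$). With these two fixes your arithmetic $3a+b=7$, $a+b=t$, together with the exclusion of $t=3,4$ via Theorem~\ref{thm2}, correctly yields $t\in\{5,7\}$ and $|\Cent(R)|=t+1\in\{6,8\}$.

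The paper's own proof is considerably shorter and bypasses the non-commuting-set machinery entirely. Since centralizers are constant on cosets of $Z:=Z(R)$ and there are seven nontrivial cosets, $|\Cent(R)|\le 8$. If strict inequality holds, some collision $C_R(r_i)=C_R(r_j)$ occurs; then $C_R(r_i)/Z$ cannot have order $2$ (it contains the distinct cosets of $r_i$, $r_j$, $r_i+r_j$), so it has order $4$ and one checks $C_R(r_i)=C_R(r_j)=C_R(r_i+r_j)$. Three cosets share a centralizer, giving $|\Cent(R)|\le 6$, and Theorem~\ref{4cent} rules out $4$ and $5$. Your route has the advantage of producing extra structural information (the value of $t$ and the shapes of the centralizer images), at the cost of a longer argument and heavier reliance on the earlier propositions.
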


\begin{proof}
Given $\frac{R}{Z(R)} \cong \Z_2 \times \Z_2 \times \Z_2$. Suppose $\frac{R}{Z(R)} = \{Z, r_1 + Z, \dots, r_7 + Z\}$, where $Z := Z(R)$. Then $|\Cent(R)| \leq 8$.

Suppose $\Cent(R) < 8$ then $C_R(r_i) = C_R(r_j)$ for some $1 \leq i, j \leq 7 \;(i \neq j)$. Now we consider $\frac{C_R(r_i)}{Z}$. Then possible orders of $\frac{C_R(r_i)}{Z}$ are $2$ and $4$. If $|\frac{C_R(r_i)}{Z}| = 2$ then $C_R(r_i) = Z \cup (r_i + Z)$ which is a contradiction, as $r_j, (r_i + r_j) \in C_R(r_i)$. So $|\frac{C_R(r_i)}{Z}| = 4$ and $\frac{C_R(r_i)}{Z} = \{Z, r_i + Z, r_j + Z, (r_i + r_j) + Z\} = \frac{C_R(r_j)}{Z}$. Next, we consider $\frac{C_R(r_i + r_j)}{Z}$. Here also, proceeding  in a similar way, we get $|\frac{C_R(r_i + r_j)}{Z}| = 4$ and $\frac{C_R(r_i + r_j)}{Z} = \{Z, r_i + Z, r_j + Z, (r_i + r_j) + Z\}$, as $r_i, r_j \in C_R(r_i + r_j)$. Therefore $C_R(r_i) = C_R(r_j) = C_R(r_i + r_j) = Z \cup (r_i + Z) \cup (r_j + Z) \cup (r_i + r_j + Z)$. Hence $|\Cent(R)| \leq 6$. If $|\Cent(R)| = 4$ then by Theorem \ref{4cent} (a), we have $|R : Z| = 4$, a contradiction. If $|\Cent(R)| = 5$ then by Theorem \ref{4cent} (b), we have $|R : Z| = 9$, a contradiction. Thus $|\Cent(R)| = 6$. 
\end{proof}

The group theoretic analogue of the above result can be found in \cite{ashrafi2006}.

\section{$7$-centralizer finite rings}


In this section, we give a characterization of $7$-centralizer finite rings. We begin with the following result.
\begin{lemma}\label{Lemma 4.1}
Let $R$ be a finite ring and $X = \{r_1, r_2, \dots, r_t\}$ be a set of pairwise non-commuting elements of $R$ having maximal size. If $|\Cent(R)| = t + 2$ then there exists a proper non-commutative centralizer $C_R(r)$ which contains $C_R(r_{i_1}), C_R(r_{i_2})$ and $C_R(r_{i_3})$ for three distinct $r_{i_1}, r_{i_2}, r_{i_3} \in \{1, 2, \dots, t\}$. 
\end{lemma}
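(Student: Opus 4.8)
The plan is to argue by contradiction: assume that no proper non-commutative centralizer contains three of the $C_R(r_i)$. First I would record the structural consequences of $|\Cent(R)| = t+2$. By Proposition \ref{intersection Z(R)}, the family $\{C_R(r_i) : 1 \leq i \leq t\}$ is an irredundant cover of $R$ with intersection $Z(R)$, and $\Cent(R)$ consists of $R$, these $t$ centralizers, and exactly one further centralizer, which must be of the form $C_R(r)$ for some non-central $r$ not commuting with the pattern forced by the $r_i$'s. Since $t+1 \leq |\Cent(R)|$ with equality characterized (for small $t$) by Theorem \ref{thm2}, here the extra centralizer $C_R(r)$ is genuinely new, so $C_R(r) \neq C_R(r_i)$ for all $i$ and $C_R(r) \neq R$.

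The key step is to analyze where $C_R(r)$ sits. I would first show $C_R(r)$ must be non-commutative. If $C_R(r)$ were commutative, then every proper centralizer would be commutative (the $C_R(r_i)$ are forced to be commutative by an argument as in the proof of Proposition \ref{thm3}, since any non-commuting pair inside $C_R(r_i)$ would produce a new centralizer not among the $t+2$ available, via the irredundant-cover bookkeeping), and then Proposition \ref{thm1}(c) would give that distinct proper centralizers meet in $Z(R)$; a counting argument on $|R| = \sum |C_R(r_i)| - (t-1)|Z(R)|$ together with the single extra centralizer would contradict the maximality of $t$ (one could enlarge the pairwise-noncommuting set). So $C_R(r)$ is non-commutative, hence contains a pairwise non-commuting pair $a, b$. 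Now $C_R(a)$ and $C_R(b)$ are proper centralizers distinct from $R$ and from $C_R(r)$; since $r \in C_R(a) \cap C_R(b)$ would force... — more precisely, since $a, b \in C_R(r)$, and the only centralizers available are $R$, the $C_R(r_i)$, and $C_R(r)$, both $C_R(a)$ and $C_R(b)$ must each coincide with some $C_R(r_{i})$. The hard part is to push this further and extract \emph{three} of the $C_R(r_i)$ inside a single non-commutative centralizer rather than just two.

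To get the third, I would consider the element $a + b$ (or another suitable combination). Since $a, b \in C_R(r)$ and $C_R(r)$ is a subring-like set closed under addition containing $Z(R)$, we have $a + b \in C_R(r)$, and $a + b$ does not commute with $a$ (as $a(a+b) = a^2 + ab \neq a^2 + ba = (a+b)a$), so $a+b$ is non-central and $C_R(a+b)$ is again one of $R$, the $C_R(r_i)$, or $C_R(r)$. If $C_R(a+b) = C_R(r)$ then $a \in C_R(a+b)$, contradicting $a(a+b) \neq (a+b)a$; if $C_R(a+b) = R$ then $a+b \in Z(R)$, impossible. So $C_R(a+b) = C_R(r_{i_3})$ for some index, and I would verify $i_3$ is distinct from the indices attached to $C_R(a)$ and $C_R(b)$: if, say, $C_R(a+b) = C_R(a) = C_R(r_{i_1})$, then $b = (a+b) - a \in C_R(r_{i_1})$ would force $a, b \in C_R(r_{i_1})$ with $a,b$ in that commutative(!) centralizer, contradiction. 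Thus $C_R(r)$ contains the three distinct centralizers $C_R(r_{i_1}), C_R(r_{i_2}), C_R(r_{i_3})$ (each of the form $C_R(x)$ for $x = a, b, a+b$, and each containing $C_R(r_{i_j})$ since inclusion of commutative centralizers into $C_R(r)$ follows from $r \in C_R(x)$ reasoning as in Proposition \ref{thm1}(c)). The main obstacle I anticipate is the bookkeeping that keeps $C_R(r)$ the unique ``extra'' centralizer and forces each of $C_R(a), C_R(b), C_R(a+b)$ to land on a \emph{distinct} $C_R(r_i)$ rather than collapsing onto each other or onto $C_R(r)$; handling the commutativity of the $C_R(r_i)$ carefully is what makes the distinctness arguments go through.
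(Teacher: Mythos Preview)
Your approach is correct and essentially the same as the paper's: both first show each $C_R(r_i)$ is commutative, identify the unique extra proper centralizer $C_R(r)$ as necessarily non-commutative, and then locate three distinct $C_R(r_{i_j})$ inside it---your explicit triple $a,\,b,\,a+b$ is exactly what underlies the paper's terser appeal to $|\Cent(C_R(r))|\geq 4$. The only real difference is in the non-commutativity step: the paper argues directly that any commutative proper centralizer must equal some $C_R(r_j)$ (since $r\in C_R(r_j)$ for some $j$ and commutativity forces equality), whereas your detour through Proposition~\ref{thm1}(c) and enlarging the non-commuting set also works but the counting identity you mention is unnecessary; as a minor clean-up, you should also explicitly rule out $C_R(a)=C_R(r)$ (immediate, since then $b\in C_R(a)$).
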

\begin{proof}
Suppose, there exists an $r_i \in X$ such that $S := C_R(r_i)$ is non-commutative. So $|\Cent(S)| \geq 4$ and $S$ contains at least three proper centralizers, say $C_S(s_j), j = 1, 2, 3$. Therefore by hypothesis $C_R(r_i) \neq C_R(s_j)$ for every $i \in \{1, 2, \dots, t\}$ and $j \in \{1, 2, 3\}$. 
Hence $|\Cent(R)| > t + 2$, a contradiction. Thus each $C_R(r_i), 1 \leq i \leq t$ is commutative.

Next, suppose for an element $r \in R$, $C_R(r)$ is commutative. So, there exists an index $j \in \{1, 2, \dots, t\}$ such that $r \in C_R(r_j)$. Now for any element $s \in C_R(r)$ we have $s \in C_R(r_j)$, as $C_R(r_j)$ is commutative. Therefore $C_R(r) \subseteq C_R(r_j)$. Similarly, it can be seen that $C_R(r_j) \subseteq C_R(r)$. Hence for each element $r \in R$, $C_r(R)$ is commutative if and only if $C_R(r) = C_R(r_j)$ for some $i \in \{1, 2, \dots, t\}$. Therefore there exists a proper centralizer $C_R(r) := T$ such that $T$ is not commutative, as $|\Cent(R)| = t + 2$. So $|\Cent(T)| \geq 4$. Suppose $C_T(r_{i_1}), C_T(r_{i_2})$ and $C_T(r_{i_3})$ are three proper centralizers of $T$, so these are three proper centralizers of $R$. Hence $C_R(r)$ contains $C_R(r_{i_1}), C_R(r_{i_2})$ and $C_R(r_{i_3})$ for three distinct $r_{i_1}, r_{i_2}, r_{i_3} \in \{1, 2, \dots, t\}$, as $|\Cent(R)| = t + 2$. Thus the lemma follows.  
\end{proof}

The next result, which is analogous to \cite[Lemma $2.7$]{ASA2007}, is also useful. 
\begin{lemma}\label{powerof2}
Let $R$ be a $7$-centralizer ring. Then $|R : Z(R)|$ is not a power of $2$.
\end{lemma}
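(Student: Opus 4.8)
The plan is to argue by contradiction: suppose $|R:Z(R)| = 2^k$ for some $k$, and derive a contradiction with $|\Cent(R)| = 7$. First I would pin down the size $t$ of a maximal set of pairwise non-commuting elements. By Proposition \ref{thm1}(b), $t+1 \leq 7$, and by Theorem \ref{thm2}, $t = 3$ forces $|\Cent(R)| = 4$ while $t = 4$ forces $|\Cent(R)| = 5$; hence $t = 5$ or $t = 6$. The case $t = 6$ is the ``generic'' one where $|\Cent(R)| = t+1$, so every proper centralizer is commutative (the argument of Proposition \ref{thm3} applies) and for non-central $r,s$ either $C_R(r) = C_R(s)$ or $C_R(r) \cap C_R(s) = Z(R)$. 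The case $t = 5$ means $|\Cent(R)| = t + 2$, so Lemma \ref{Lemma 4.1} applies and gives a proper non-commutative centralizer containing three of the $C_R(r_i)$.

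For the case $t = 6$: with $S_i = C_R(r_i)$ all commutative and pairwise intersecting in $Z := Z(R)$, write the inclusion–exclusion count $|R| = \sum_{i=1}^{6} |S_i| - 5|Z|$. Since each $|R:S_i|$ divides $|R:Z| = 2^k$, each $|S_i|/|Z|$ is a power of $2$; and since $S_i$ is a proper subring, $|S_i| \leq |R|/2$. If every $|S_i| = |R|/2$ then $|R| = 3|R| - 5|Z|$, forcing $|R:Z| = 5/2$, absurd; more generally, bounding $|S_i| \leq |R|/2$ with at least one proportion being $\leq 1/4$ (since $|S_1 + S_2| \leq |R|$ gives $|S_1||S_2|/|Z| \leq |R|$, so the two largest proportions can't both be $1/2$) leads to $\sum |S_i|/|Z| \leq $ a small bound, and then Theorem \ref{tomtheorem} applied to the irredundant cover (Proposition \ref{intersection Z(R)}(c)) with $n = 6$ pins $|R:Z|$ into a short list: $\alpha_2 \leq 5$, and either $|R:D| \leq 25 \cdot 2! = 50$ or $|R:D| \leq 64 \cdot 2! $; intersecting ``power of $2$ at most $64$'' with the divisibility and cover constraints, one checks by hand that no value survives. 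The arithmetic of eliminating each candidate $2^k \in \{8, 16, 32, 64\}$ against the equation $|R| = \sum|S_i| - 5|Z|$ with power-of-$2$ proportions is the routine part; I expect $2^k$ to be forced down to something like $|R:Z| = 8$, which would give $|\Cent(R)| = 6$ by Theorem \ref{6cent} reasoning, contradiction.

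For the case $t = 5$, $|\Cent(R)| = 7$: by Lemma \ref{Lemma 4.1} there is a proper non-commutative centralizer $T = C_R(r)$ containing $C_R(r_{i_1}), C_R(r_{i_2}), C_R(r_{i_3})$. Then $\Cent(T)$ has at least $4$ elements (the three proper ones plus $T$), and since these proper centralizers of $T$ are also centralizers of $R$, while $T$ itself, $R$ itself, and the remaining $C_R(r_{i_4}), C_R(r_{i_5})$ must also be counted, $|\Cent(R)| \geq 7$ with equality forcing $|\Cent(T)| = 4$ exactly and $\Cent(R) = \{R, T, C_R(r_{i_1}), \dots, C_R(r_{i_5})\}$. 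Now $|\Cent(T)| = 4$ gives $|T : Z(T)| = 4$ by Theorem \ref{4cent}(a). The key leverage is that $T$ is a proper subring of $R$ with $Z(R) \subseteq Z(T)$, so $|R:Z(R)| = |R:T| \cdot |T:Z(T)| \cdot |Z(T):Z(R)| = 4 \cdot |R:T| \cdot |Z(T):Z(R)|$; if $|R:Z(R)|$ is a power of $2$, so is $|R:T|$, so $|R:T| \geq 2$ and $|R:Z(R)| \geq 8$. One then has to show $|R:Z(R)| \leq $ some bound excluding all powers of $2$: use Theorem \ref{tomtheorem} on the irredundant cover $\{C_R(r_i)\}_{i=1}^{5}$ with $n = 5$ to get $|R:Z(R)| \leq \max\{(5-1)^2 (5-3)!, (5-2)^3(5-3)!\} = \max\{32, 54\} = 54$, so $|R:Z(R)| \in \{8, 16, 32\}$; then eliminate each, presumably by a counting argument using $|R| = |C_R(r_1) \cup \dots \cup C_R(r_5)|$ together with the fact that three of these lie inside $T$ with $|T| = |R|/|R:T|$ and $|T:Z(T)| = 4$.

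The main obstacle I anticipate is the $t = 5$ case: coordinating the internal structure of $T$ (where $|T:Z(T)| = 4$, hence $T$ has exactly the centralizer pattern of a $4$-centralizer ring, with three proper centralizers of index $2$ in $T$) with the two remaining ``outside'' centralizers $C_R(r_{i_4}), C_R(r_{i_5})$ and making the inclusion–exclusion bookkeeping tight enough to kill every power of $2$ below $54$. It may be cleanest to observe that $Z(T) \supseteq Z(R)$ combined with $Z(T) \subseteq C_R(r_{i_j})$ for $j=1,2,3$ (since these are commutative and contained in $T$) forces $Z(T)$ to equal one of them or to be small, sharpening $|Z(T):Z(R)|$; but settling exactly which powers of $2$ can occur, and checking $8$, $16$, $32$ all lead to $|\Cent(R)| \neq 7$, is where the real work lies.
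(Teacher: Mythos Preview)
Your overall framework --- reducing to $t \in \{5,6\}$ and handling each separately --- matches the paper. But in both cases you miss the clean finishing move and leave yourself with unfinished casework.

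For $t = 6$: you correctly reach $|R| = \sum_{i=1}^{6}|S_i| - 5|Z(R)|$, i.e.\ $\sum_{i=1}^{6} k_i = |R:Z(R)| + 5$ where $k_i = |S_i:Z(R)|$. You also note each $k_i$ divides $|R:Z(R)| = 2^m$. The paper's argument stops right here with a parity observation: each $k_i > 1$ is a power of $2$, hence even, so $\sum k_i$ is even; but $2^m + 5$ is odd. Contradiction. Your proposed route through Tomkinson bounds and hand-checking $\{8,16,32,64\}$ would work in principle, but is unnecessary, and your aside that ``$|R:Z| = 8$ would give $|\Cent(R)| = 6$ by Theorem \ref{6cent}'' is wrong --- that theorem goes only one way.

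For $t = 5$: the paper does \emph{not} analyze $|\Cent(T)|$ or $|T:Z(T)|$ as you propose. Instead it first shows $C_R(r_i) \cap C_R(r_j) = Z(R)$ for $i \neq j$ (else one gets an eighth centralizer), and then uses Theorem \ref{tomtheorem} to bound $\beta_2 \leq 4$. The cases $\beta_2 = 2,3$ are dispatched via the product formula $|R| \geq |C_R(r_1)+C_R(r_2)| = |C_R(r_1)||C_R(r_2)|/|Z(R)|$, forcing $|R:Z(R)| \in \{4,9\}$ and hence $|\Cent(R)| \in \{4,5\}$. So $\beta_2 = 4$, and Lemma \ref{tomlemma} then gives $\beta_2 = \cdots = \beta_5 = 4$. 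Now the key step: by Lemma \ref{Lemma 4.1}, the non-commutative $C_R(r)$ contains three of the $C_R(r_i)$, say $i = 2,3,4$. Hence $R = C_R(r_1) \cup C_R(r_5) \cup C_R(r)$ is a cover by three subgroups, which forces $|R:C_R(r_5)| \leq 2$ --- contradicting $\beta_5 = 4$. This three-cover reduction is the idea you are missing; it eliminates all powers of $2$ at once without ever listing $\{8,16,32\}$ or examining $Z(T)$.

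Your approach is not wrong, but it is incomplete where the paper's is complete, and in both cases the paper's argument is a one-liner once the right observation is made.
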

\begin{proof}
Suppose that $|R : Z(R)|$ is a power of $2$. Let $ \{r_1, r_2, \dots, r_t\}$ be a set of pairwise non-commuting 
elements of $R$ having maximal size such that $|R : C_R(r_i)| = \beta _i, 1 \leq i \leq t$ with $\beta_1 \leq \beta_2 \leq \dots \leq \beta_t$. By Theorem \ref{thm2}, if $t = 3$ then $|\Cent(R)| = 4$ and if $t = 4$ then $|\Cent(R)| = 5$. So in both cases we get contradictions. Hence, by Proposition \ref{thm1}, we have $t = 5$ or $6$.

Suppose $t = 5$. Then by Theorem \ref{tomtheorem}, we have 
$\beta_2 \leq 4$. Also by Proposition $2.5$ of \cite{ASA2007}, there exists a proper non-commutative centralizer $C_R(r)$ which contains $C_R(r_{i_1}), C_R(r_{i_2})$ and $C_R(r_{i_3})$ for three distinct $i_1, i_2, i_3 \in \{1, 2, \dots, 5\}$. So $C_R(r_i) \cap C_R(r_j) = Z(R)$ for all $i, j \in \{1, 2, \dots, 5\}$, otherwise $\exists \;l \in C_R(r_i) \cap C_R(r_j) - Z(R)$ such that $C_R(l) \neq C_R(r_i)$ for all $i$ and $|\Cent(R)|$ will be atleast $5 + 1 + 1 + 1 = 8$, a contradiction. If $\beta_2 = 3$ then 
\[
|R| \geq |C_R(r_1) + C_R(r_2)| = \frac{|C_R(r_1)||C_R(r_2)|}{|C_R(r_1) \cap C_R(r_2)|} \geq \frac{(|R|/3)(|R|/3)}{|Z(R)|}
\] 
which gives $|R : Z(R)| \leq 9$ and so $|R : Z(R)| = 9$, as $3$ divides $|R : Z(R)|$. Therefore, by Theorem \ref{4cent}(b), we have $|\Cent(R)| = 5$, a contradiction. If $\beta_2 = 2$ then $\beta_1 = \beta_2 = 2$. Therefore 
\[
|R| \geq |C_R(r_1) + C_R(r_2)| = \frac{|C_R(r_1)||C_R(r_2)|}{|C_R(r_1) \cap C_R(r_2)|} = \frac{(|R|/2)(|R|/2)}{|Z(R)|}
\]
 which gives $|R : Z(R)| = 4$. Thus, by Theorem \ref{4cent} (a), we have $|\Cent(R)| = 4$, a contradiction. Hence $ \beta_2 = 4$. So by Lemma \ref{tomlemma}, we have $\beta_i = 4$ for $i \geq 2$. Therefore by Lemma \ref{Lemma 4.1} there exists a proper non-commutative centralizer $C_R(r)$ which contains at least three $C_R(r_i)$'s, say $i = 2, 3, 4$. Therefore $R = C_R(r_1) \cup C_R(r_5) \cup C_R(r)$ and so by Theorem \ref{tomtheorem}, 
$\beta_5 = 2$, a contradiction.

Now suppose that $t = 6$. Therefore, by Proposition \ref{thm3}, we have every proper centralizer of $R$ is commutative and for every non-central elements $r$ and $s$ of $R$, $C_R(r) = C_R(s)$ or $C_R(r) \cap C_R(s) = Z(R)$.
 Now for any $r + Z(R) \in \frac{C_R(r_i)}{Z(R)} \cap \frac{C_R(r_j)}{Z(R)}$ we have $r \in Z(R)$. So for any $r_i, r_j \in \{1, 2, \dots, 6\}, i \neq j$, we have $\frac{C_R(r_i)}{Z(R)} \cap \frac{C_R(r_j)}{Z(R)} = \{Z(R)\}$. We assume that $|\frac{C_R(r_i)}{Z(R)}| = k_i$ for $1 \leq i \leq 6$. Also for any $r \in R$ we have $r \in C_R(r_i)$, for some $i$. Therefore $\frac{R}{Z(R)} = \overset{6}{\underset{i = 1}{\cup}} \frac{C_R(r_i)}{Z(R)}$ this gives $|\frac{R}{Z(R)}| = \overset{6}{\underset{i = 1}{\sum}} |\frac{C_R(r_i)}{Z(R)}| - 5$ so $\overset{6}{\underset{i = 1}{\sum}} k_i = |\frac{R}{Z(R)}| + 5$. Hence $\overset{6}{\underset{i = 1}{\sum}} k_i$ is an odd integer, which is a contradiction, as all $k_i$s are even. Thus the lemma follows.
\end{proof}

Now we give the main result of this section.
\begin{thm}\label{7cent}
Let $R$ be a $7$-centralizer ring. Then $|R : Z(R)| = 12, 18, 20, 24$ or $25$.
\end{thm}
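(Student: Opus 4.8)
The plan is to mirror the structure of the proof of Theorem \ref{6cent}, but now starting from the dichotomy $t=5$ or $t=6$ for the maximal size $t$ of a set of pairwise non-commuting elements (Theorem \ref{thm2} rules out $t=3,4$, and Proposition \ref{thm1}(b) with $|\Cent(R)|=7$ forces $t\le 6$). Lemma \ref{powerof2} already shows $|R:Z(R)|$ is not a power of $2$, which I will use repeatedly to discard even-index cases; so in all the inclusion-exclusion bookkeeping below, whenever a computation forces $|R:Z(R)|$ to be a $2$-power I discard that branch.

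First I would handle $t=6$. Here Proposition \ref{thm3} applies: every proper centralizer is commutative and for non-central $r,s$ either $C_R(r)=C_R(s)$ or $C_R(r)\cap C_R(s)=Z(R)$. Setting $S_i=C_R(r_i)$ with $|S_1|\ge\cdots\ge|S_6|$ and $\beta_i=|R:S_i|$, one gets the exact count $|R|=\sum_{i=1}^{6}|S_i|-5|Z(R)|$, equivalently $\sum k_i = |R:Z(R)|+5$ where $k_i=|R:Z(R)|/\beta_i\ge 2$. Since $|\Cent(R)|=7=t+1$, the $S_i$ are exactly the proper centralizers. Tomkinson's Theorem \ref{tomtheorem} gives $\beta_2\le 5$; combined with $\sum(1/\beta_i)>1$ (from the union covering $R$, after dividing through by $|Z(R)|$) this pins $(\beta_1,\dots,\beta_6)$ to a short list of tuples, each of which yields $|R:Z(R)|$ as a specific value. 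Running through the admissible tuples and throwing away the $2$-power outcomes should leave exactly $|R:Z(R)|\in\{12,18,20,24,25\}$ (e.g. $(2,3,3,3,3,3)$ gives $18$, $(3,3,3,3,3,3)$ gives $25$, $(2,2,3,3,3,4)$-type tuples give $20$ or $24$, etc.).

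Next I would handle $t=5$, which is the harder case because $|\Cent(R)|=t+2=7$ now, so not all proper centralizers need be commutative, and Lemma \ref{Lemma 4.1} is exactly designed for this: there is a proper non-commutative centralizer $C_R(r)=:T$ containing three of the $C_R(r_i)$, say $C_R(r_2),C_R(r_3),C_R(r_4)$. As in the $t=5$ step of Lemma \ref{powerof2}, one first argues $C_R(r_i)\cap C_R(r_j)=Z(R)$ for all $i\ne j$ (otherwise a new centralizer appears and $|\Cent(R)|\ge 8$). Then $R=C_R(r_1)\cup C_R(r_5)\cup T$, an irredundant-type cover by three proper subgroups, so Tomkinson forces the smallest index among these to be $\le 3$; analyzing $|R:C_R(r_1)|$, $|R:C_R(r_5)|$, $|R:T|$ together with $\beta_2\le 4$ (Theorem \ref{tomtheorem}) and the product bound $|R|\ge |C_R(r_1)+C_R(r_2)| = |C_R(r_1)||C_R(r_2)|/|Z(R)|$ confines $|R:Z(R)|$ to a finite list. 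Feeding the surviving non-$2$-power values (and using Theorem \ref{4cent} to exclude $4$ and $9$, which would force $|\Cent(R)|=4,5$) through Theorem \ref{6cent}'s style of computation should again land on $\{12,18,20,24,25\}$.

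The main obstacle I expect is the $t=5$ case: unlike $t=6$ one does not get a single clean counting identity, because $T$ is a non-commutative proper centralizer whose internal structure ($|\Cent(T)|\ge 4$, and $T$ itself sits inside $R$ with its own index) must be tracked, and one has to be careful that the three cosets/centralizers inside $T$ interact correctly with $C_R(r_1)$ and $C_R(r_5)$ — in particular ruling out that $C_R(r_1)$ or $C_R(r_5)$ also lies inside $T$, which would collapse the cover. Managing the case split on $\beta_1\in\{2,3,4\}$ and $|T:Z(R)|$ simultaneously, while invoking Theorem \ref{4cent} and Theorem \ref{6cent} to kill the small indices $4,8,9,16$, is where the bulk of the (routine but lengthy) casework sits; the rest is bounded arithmetic of the same flavour as the proof of Theorem \ref{6cent}.
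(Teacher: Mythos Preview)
Your overall architecture matches the paper's: split into $t=5$ and $t=6$, use Proposition~\ref{thm3} and the counting identity for $t=6$, use Lemma~\ref{Lemma 4.1} plus Tomkinson for $t=5$, and filter everything through Lemma~\ref{powerof2} and Theorem~\ref{4cent}. Two points deserve correction, though.

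First, you have the difficulty inverted. In the paper the $t=5$ case is the \emph{short} one: once $C_R(r_i)\cap C_R(r_j)=Z(R)$ is established, Theorem~\ref{tomtheorem} gives $\beta_2\le 4$, whence $|R|\ge |C_R(r_1)+C_R(r_2)|=\dfrac{|C_R(r_1)||C_R(r_2)|}{|Z(R)|}\ge \dfrac{(|R|/4)^2}{|Z(R)|}$ forces $|R:Z(R)|\le 16$; Lemma~\ref{powerof2} kills $4,8,16$, Theorem~\ref{4cent} kills $4,9$, and the remaining squarefree values make $R/Z(R)$ cyclic, so only $12$ survives. No analysis of the three-subgroup cover $C_R(r_1)\cup C_R(r_5)\cup T$ or of $|T:Z(R)|$ is needed here --- that argument lives inside the proof of Lemma~\ref{powerof2}, not in Theorem~\ref{7cent}. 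The lengthy casework is all in $t=6$.

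Second, your sample $\beta$-tuples for $t=6$ are inconsistent with the pairwise-disjointness constraint. Since $S_i\cap S_j=Z(R)$, one has $k_ik_j\le |R:Z(R)|$ where $k_i=|S_i:Z(R)|$; in particular $\beta_1=2$ forces $k_j=2$ for all $j\ge 2$, so a tuple like $(2,3,3,3,3,3)$ cannot occur, and it certainly does not yield $18$ (the identity $\sum k_i=|R:Z(R)|+5$ would give $\tfrac{7}{6}|R:Z(R)|=5$). The paper does not enumerate full $\beta$-tuples; it runs a two-level case split on $\beta_1\in\{2,3,4,5\}$ and then on $k_2=|S_2:Z(R)|\le \beta_1$, bounding $|S_3|+\cdots+|S_6|$ from above by $4k_2|Z(R)|$ in each subcase. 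That bookkeeping is what actually produces the list $\{12,18,20,24,25\}$.
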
  
\begin{proof}
Given $|\Cent(R)| = 7$. Let $\{r_1, r_2, \dots, r_t\}$ be a set of pairwise non-commuting elements of $R$ having maximal size. Then $C_R(r_i), 1 \leq i \leq t$ are distinct proper centralizers of $R$. Suppose $S_i = C_R(r_i), 1 \leq i \leq t$ and $|S_1| \geq |S_2| \geq \dots \geq |S_t|$. Using Theorem \ref{thm2}, if $t = 3$ then $|\Cent(R)| = 4$ and if $t = 4$ then $|\Cent(R)| = 5$. So in both cases we get contradictions. Hence using Proposition \ref{thm1}, we have $t = 5$ or $6$.

Suppose $t = 5$. Then by Lemma \ref{Lemma 4.1} there exists a proper non-commutative centralizer $C_R(r)$ which contains $C_R(r_{i_1}), C_R(r_{i_2})$ and $C_R(r_{i_3})$ for three distinct $i_1, i_2, i_3 \in \{1, 2, \dots, 5\}$. So $C_R(r_i) \cap C_R(r_j) = Z(R)$ for all $i, j \in \{1, 2, \dots, 5\}$, otherwise $\exists \;l \in C_R(r_i) \cap C_R(r_j) - Z(R)$ such that $C_R(l) \neq C_R(r_i)$ for all $i$ and $|\Cent(R)|$ will be at least $5 + 1 + 1 + 1 = 8$, a contradiction. Now using Theorem \ref{tomtheorem}, if $|R : C_R(r_i)| = \beta_i, 1 \leq i \leq 5$ and $\beta_1 \leq \beta_2 \leq \dots \leq \beta_5$, then $\beta_2 \leq 4$. Suppose $\beta_2 \leq 3$ then $|R| \geq |C_R(r_1) + C_R(r_2)| = \frac{|C_R(r_1)||C_R(r_2)|}{|C_R(r_1) \cap C_R(r_2)|} \geq \frac{(|R|/3)(|R|/3)}{|Z(R)|}$ gives $|R : Z(R)| \leq 9$ and so by Lemma \ref{powerof2}, we have $|R : Z(R)| = 4, 9$. Therefore, by Theorem \ref{4cent}, we have $|\Cent(R)| = 4$ or $5$, a contradiction. If $\beta_2 \leq 4$, in a similar way it can be seen that $|R : Z(R)| \leq 16$. So using Lemma \ref{powerof2}, Theorem \ref{4cent}, we have $|R : Z(R)| = 12$.

Suppose $t = 6$ then using Proposition \ref{thm3}, we have for every non-central elements $r$ and $s$ of $R$, $C_R(r) = C_R(s)$ or $C_R(r) \cap C_R(s) = Z(R)$. Therefore we have
\[
|R| = \overset{6}{\underset{i = 1}{\sum}}|C_R(i)| - 5|Z(R)|.
\]
Suppose $|S_1| < \frac{|R|}{5}$ that is $|S_1| \leq \frac{|R|}{6}$ as $1 < |S_1| \leq \frac{|R|}{2}$. Then we have $|R| \leq |R| - 5|Z(R)|$ which gives $|R| < |R|$, a contradiction.
Hence $|S_1| = \frac{|R|}{2}, \frac{|R|}{3}, \frac{|R|}{4}$ or $\frac{|R|}{5}$.

\noindent \textbf{Case $1$:} $|S_1| = \frac{|R|}{2}$. So $|R| \geq |S_1 + S_2| = \frac{(|R|/2)|S_2|}{|Z(R)|}$ which gives $|S_2| \leq 2|Z(R)|$ and hence $|S_2| = 2|Z(R)|$. Similarly it can be seen that $|S_3| = |S_4| = |S_5| = |S_6| = 2|Z(R)|$. Therefore $|R| = \frac{|R|}{2} + 10|Z(R)| - 5|Z(R)|$ which gives $|R : Z(R)| = 10$ and hence $R$ is commutative, a contradiction.

\noindent \textbf{Case $2$:} $|S_1| = \frac{|R|}{3}$. So $|R| \geq |S_1 + S_2| = \frac{(|R|/3)|S_2|}{|Z(R)|}$ which gives $|S_2| \leq 3|Z(R)|$.

\textit{Subcase $(i)$:} $|S_2| = 2|Z(R)|$. Therefore $|S_3| = |S_4| = |S_5| = |S_6| = 2|Z(R)|$ which gives $|R| =  \frac{|R|}{3} + 10|Z(R)| - 5|Z(R)|$ and so $|R|$ is not divisible by $|Z(R)|$, a contradiction.

\textit{Subcase $(ii)$:} $|S_2| = 3|Z(R)|$. Therefore $|R| = \frac{|R|}{3} + 3|Z(R)| + |S_3| + |S_4| + |S_5| + |S_6| - 5|Z(R)|$ which gives $\frac{2|R|}{3} < |S_3| + |S_4| + |S_5| + |S_6|\leq 12|Z(R)|$. Therefore $|R| < 18|Z(R)|$ and hence using Lemma \ref{powerof2}, Theorem \ref{4cent}, we have $|R : Z(R)| = 12$.

\noindent \textbf{Case $3$:} $|S_1| = \frac{|R|}{4}$. So $|R| \geq |S_1 + S_2| = \frac{(|R|/4)|S_2|}{|Z(R)|}$ which gives $|S_2| \leq 4|Z(R)|$.

\textit{Subcase $(i)$:} $|S_2| = 2|Z(R)|$. Therefore $|R| = \frac{|R|}{4} + 2|Z(R)| + |S_3| + |S_4| + |S_5| + |S_6| - 5|Z(R)|$ which gives $\frac{3|R|}{4} < |S_3| + |S_4| + |S_5| + |S_6| = 8|Z(R)|$. Therefore $|R| \leq 10|Z(R)|$ and hence $|R : Z(R)| = 4, 8, 9$. So using Lemma \ref{powerof2}, Theorem \ref{4cent}, we have $|\Cent(R)| = 4$ or $5$, a contradiction.

\textit{Subcase $(ii)$:} $|S_2| = 3|Z(R)|$. Therefore $|R| = \frac{|R|}{4} + 3|Z(R)| + |S_3| + |S_4| + |S_5| + |S_6| - 5|Z(R)|$ which gives $\frac{3|R|}{4} < |S_3| + |S_4| + |S_5| + |S_6| \leq 12|Z(R)|$. Therefore $|R| \leq 15|Z(R)|$ and hence using Lemma \ref{powerof2}, Theorem \ref{4cent}, we have $|R : Z(R)| = 12$.

\textit{Subcase $(iii)$:} $|S_2| = 4|Z(R)|$. Therefore $|R| = \frac{|R|}{4} + 4|Z(R)| + |S_3| + |S_4| + |S_5| +  |S_6| - 5|Z(R)|$ which gives $\frac{3|R|}{4} < 16|Z(R)|$. Therefore $|R| \leq 21|Z(R)|$ and hence using Lemma \ref{powerof2}, Theorem \ref{4cent}, we have $|R : Z(R)| =  12, 18, 20$.
 
\noindent \textbf{Case $4$:} $|S_1| = \frac{|R|}{5}$. So $|R| \geq |S_1 + S_2| = \frac{(|R|/5)|S_2|}{|Z(R)|}$ which gives $|S_2| \leq 5|Z(R)|$.

\textit{Subcase $(i)$:} $|S_2| = 2|Z(R)|$. Therefore $|R| = \frac{|R|}{5} + 2|Z(R)| + |S_3| + |S_4| + |S_5| + |S_6| - 5|Z(R)|$ which gives $\frac{4|R|}{5} < 8|Z(R)|$. Therefore $|R| \leq 9|Z(R)|$, a contradiction.

\textit{Subcase $(ii)$:} $|S_2| = 3|Z(R)|$. Therefore $|R| = \frac{|R|}{5} + 3|Z(R)| + |S_3| + |S_4| + |S_5| + |S_6| - 5|Z(R)|$ which gives $\frac{4|R|}{5} < |S_3| + |S_4| + |S_5| + |S_6| \leq 12|Z(R)|$. Therefore $|R| < 15|Z(R)|$ and hence using Lemma \ref{powerof2}, Theorem \ref{4cent}, we have $|R : Z(R)| = 12$.

\textit{Subcase $(iii)$:} $|S_2| = 4|Z(R)|$. Therefore $|R| = \frac{|R|}{5} + 4|Z(R)| + |S_3| + |S_4| + |S_5| +  |S_6| - 5|Z(R)|$ which gives $\frac{4|R|}{5} < 16|Z(R)|$. Therefore $|R| < 20|Z(R)|$ and hence using Lemma \ref{powerof2}, Theorem \ref{4cent}, we have $|R : Z(R)| = 12, 18$.

\textit{Subcase $(iv)$:} $|S_2| = 5|Z(R)|$. Therefore $|R| = \frac{|R|}{5} + 5|Z(R)| + |S_3| + |S_4| + |S_5| +  |S_6| - 5|Z(R)|$ which gives $\frac{4|R|}{5} \leq 20|Z(R)|$. Therefore $|R| \leq 25|Z(R)|$ and hence using Lemma \ref{powerof2}, Theorem \ref{4cent}, we have $|R : Z(R)| = 12, 18, 20, 24, 25$.
 This completes the proof.
   \end{proof}

We conclude this paper by noting that the converse of the above theorem is true if $|R: Z(R)| = 25$.



\end{document}